\newtheorem{theorem}{Theorem}[section]
\newtheorem*{theorem*}{Theorem}
{\theoremstyle{definition}
\newtheorem{defn}[theorem]{Definition}}
\newtheorem{lemma}[theorem]{Lemma}
\newtheorem{corollary}[theorem]{Corollary}
{\theoremstyle{remark}
\newtheorem*{remark}{Remark}}
{\theoremstyle{remark}
\newtheorem*{convention}{Convention}}
\newcommand{\h}{\mathbb{H}}
\newcommand{\R}{\mathbb{R}}
\newcommand{\C}{\mathbb{C}}
\newcommand{\Q}{\mathbb{Q}}
\newcommand{\Z}{\mathbb{Z}}
\newtheorem*{rep@theorem}{\rep@title}           
\newcommand{\newreptheorem}[2]{%
\newenvironment{rep#1}[1]{%
 \def\rep@title{#2 \ref*{##1}}%
 \begin{rep@theorem}}%
 {\end{rep@theorem}}}
\title{Systole Length in Hyperbolic $n$-Manifolds}
\author{Joe Scull}
\subjclass[2010]{32Q45, 57K32, 57M}
\begin{document}
\keywords{Injectivity Radius, Systole, Hyperbolic Manifold, Triangulation}
\thanks{This work was supported by The Engineering and Physical Sciences Research Council (EPSRC) under grant EP/R513295/1 studentship 2100094}

\begin{abstract}
We show that the length $R$ of a systole of a closed hyperbolic $n$-manifold $(n \geq 3)$ admitting a triangulation by $t$ $n$-simplices can be bounded below by a function of $n$ and $t$, namely 
\[ R \geq \frac{1}{2^{(nt)^{O(n^4t)} }} .\]
We do this by finding a relation between the number of $n$-simplices and the diameter of the manifold  and by giving explicit bounds for a well known relation between the length of the core curve of a Margulis tube and its radius.

We prove the same result for finite volume manifolds, with a similar but slightly more involved proof.

\end{abstract}

\maketitle

\section{Introduction}
\label{SecIntro}

A major tool used to understand Riemannian manifolds is examining how different geometric properties of such manifolds relate to one another or conversely identifying when there is no such relation.
Gromov \cite{Gromov78} proves a fundamental relation between volume and diameter for Riemannian manifolds $M$ of negative curvature and dimension $n \geq 8$, proving that if sectional curvature of $M$ lies in the interval $(-1,0)$, then 
\[ vol(M) \geq C_n(1 + diam(M))  \]
where $C_n$ is some constant dependent on $n$. A similar but slightly weaker result is also proven for $n\geq 4$. Conversely, such a result cannot hold in dimension $3$ as there exist sequences of hyperbolic 3-manifolds with volume uniformly bounded whose diameters tend to infinity, a fact which follows from Thurston's hyperbolic Dehn surgery theorem \cite{Thurston78}.
Influential to Gromov's paper was one almost a decade earlier by Cheeger \cite{Cheeger70} in which he finds a lower bound on injectivity radius as a function of volume, diameter and a lower bound on sectional curvature.
He goes on to use this relationship to prove the famous Cheeger finiteness theorem, a modern statement of which can be found in  \cite{Chavel95}.

For hyperbolic manifolds of dimension $\geq 3$, Margulis' Lemma and the resulting thick-thin decomposition  tell us that having a sufficiently short systole is indicative of the existence of Margulis tubes, whose diameters grow as the length of their core curves decrease.
In this paper we give an explicit lower bound on how the diameter of these tubes relate to the length of the core curve, and further show how this depends on $n$.
\newpage

\begin{reptheorem}{InjRadDiameter}
Suppose that $M$ is a finite volume hyperbolic $n$-manifold  $(n \geq 3)$ with systole(s) of length $R \leq 2\epsilon_n$, where $\epsilon_n$ is the Margulis constant in dimension $n$. Then the distance from a systole to the boundary of the Margulis tube containing it is bounded below by \[\frac{1}{n} log\left(\frac{1}{R}\right) +  log (\epsilon_n) -log( 4). \]
\end{reptheorem}

A less explicit version of this theorem is surely known to experts.  In dimensions $\geq 4$ Reznikov \cite{Reznikov95} shows that for a closed hyperbolic manifold a small injectivity radius implies  that the manifold has both a large diameter and volume, but how this depends on dimension isn't proven. A similar result for $n \geq 8$ follows from our Theorem \ref{InjRadDiameter} and Gromov's result above.
Belolipetsky and Thomson \cite{BelolipetskyThomson11} also prove a similar result for some infinite families of hyperbolic manifolds whose injectivity radii tend to 0. It is worth noting that their result, which builds on earlier work of Agol \cite{Agol06}, demonstrates the existence of hyperbolic manifolds with arbitrarily small systoles and thus proves that there can be no universal lower bound on systole length. In our second main theorem we show that triangulation complexity can be used to bound from below the length of a systole.

\begin{theorem}\label{Thm1.1}
Given a closed (or finite volume) hyperbolic $n$-manifold $(n \geq 3)$ triangulated by $t$ $n$-simplices (possibly semi-ideal), the length $R$ of a systole of $M$ is bounded below by a function of $t$ and $n$, in particular 
\[ R \geq \frac{1}{2^{(nt)^{O(n^4t)} }} \]
\end{theorem}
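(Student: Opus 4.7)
The plan is to combine Theorem~\ref{InjRadDiameter} with a separate bound on the diameter of the manifold. Theorem~\ref{InjRadDiameter} tells us that whenever $R \leq 2\epsilon_n$, the distance $\rho$ from the systole to the boundary of its Margulis tube satisfies $\rho \geq \tfrac{1}{n}\log(1/R) + \log\epsilon_n - \log 4$. Since $\rho \leq \mathrm{diam}(M)$ (or, in the cusped case, the diameter of a suitable compact core), an upper bound of the form $\mathrm{diam}(M) \leq (nt)^{O(n^4 t)}$ rearranges directly to the claimed $R \geq 1/2^{(nt)^{O(n^4 t)}}$; the case $R > 2\epsilon_n$ is trivially covered by the same bound.

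First I would reduce the diameter bound to an edge-length bound. Any two points of $M$ (respectively, of its horospherically-truncated compact core) can be joined by a path traversing at most $t$ simplices and contributing at most one edge length per simplex, so $\mathrm{diam}(M) \leq t \cdot \ell_{\max}$, where $\ell_{\max}$ is the largest edge length in the triangulation. Thus it suffices to prove $\ell_{\max} \leq (nt)^{O(n^4 t)}$.

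The heart of the proof would be to present the edge lengths as the solution of a polynomial system of controlled complexity. A hyperbolic $n$-simplex is determined up to isometry by its $\binom{n+1}{2}$ edge lengths via the Cayley--Menger relation, or dually by its Gram matrix of dihedral angles. The gluing data impose that identified edges agree, that the dihedral angles around each codimension-two face sum to $2\pi$, and (in the cusped case) that the cusp holonomies are parabolic. Introducing auxiliary variables for $\cosh$ of each edge length and for $\cos$, $\sin$ of each dihedral angle, and expanding the $2\pi$ angle-sum condition via multi-angle addition formulas, recasts the whole constraint set as a polynomial system in $O(n^2 t)$ variables of degree $\mathrm{poly}(n)$. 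Mostow rigidity for $n \geq 3$ then guarantees that this system has an isolated solution, and standard quantitative bounds on the coordinates of isolated real zeros of polynomial systems of this shape (of Basu--Pollack--Roy type) yield $\cosh(\ell_{\max}) \leq \exp\!\bigl((nt)^{O(n^4 t)}\bigr)$, finishing the diameter bound.

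The main obstacle is this last step: one must encode the $2\pi$ angle-sum conditions so that the polynomial degrees stay polynomial in $n$ rather than ballooning with the valence of each codimension-two face, and then match the resulting effective bound to the exact shape $(nt)^{O(n^4 t)}$. Handling the finite-volume (cusped) case also takes care, since semi-ideal simplices have infinite diameter; but as $R \leq 2\epsilon_n$ forces the systole to lie in a Margulis tube well inside the thick part, a horospherical truncation at each cusp yields a compact core with the same combinatorial triangulation, and the entire argument transfers with only cosmetic changes.
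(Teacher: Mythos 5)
Your outer skeleton matches the paper: combine Theorem \ref{InjRadDiameter} with a diameter bound of the form $(nt)^{O(n^4t)}$, obtained by bounding edge lengths via quantitative bounds on solutions of a polynomial system, and treat the case $R > 2\epsilon_n$ trivially. But the middle step, as you set it up, has a genuine gap. Your polynomial system encodes a \emph{geometric} realization of the given triangulation: edge lengths and dihedral angles of each simplex satisfying Cayley--Menger/Gram constraints, with angle sums $2\pi$ around codimension-two faces and parabolic cusp holonomy. A topological triangulation of a hyperbolic manifold need not admit any such geodesic realization --- straightening the simplices in the hyperbolic metric can produce degenerate, overlapping or negatively oriented simplices, so the angle-sum conditions fail and your system may simply have no solution corresponding to $M$. (This is precisely the restriction the paper points out in the work of Kalelkar and Raghunath, whose bounds require the triangulation to be geometric.) The paper avoids this by encoding instead a cocycle on the $1$-skeleton with values in $SO(n,1)$ (Lemmas \ref{cocyclepolys1}, \ref{cocyclepolys2}): such a cocycle always exists for an arbitrary topological triangulation, and the straightened map it produces (Lemma \ref{HomotopyEquiv}) is only a surjective homotopy equivalence onto geodesic simplices --- which is enough to bound the diameter --- rather than a geometric triangulation.

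A second, related problem is your rigidity input. The quantitative theorems you invoke (Grigoriev--Vorobjov, or Basu--Pollack--Roy type) produce a bounded solution in \emph{each connected component} of the solution set; to conclude anything about $M$ you need the component containing the ``true'' solution to consist entirely of solutions inducing the correct hyperbolic structure. Mostow rigidity does not give this: it compares two faithful lattice representations but says nothing about nearby non-faithful or non-discrete solutions in the same component, and in your coordinates the solution is not isolated anyway (moving vertex images already gives positive-dimensional families). The paper uses local deformation rigidity --- Calabi--Weil in the closed case, Garland--Raghunathan for cusped $n\geq 4$, and Calabi--Weil with explicit trace-squared-equals-$4$ parabolicity constraints in $SL(2,\mathbb{C})$ for cusped $n=3$ --- exactly to guarantee that the whole component consists of faithful lattice representations, which Mostow--Prasad then identifies with the given structure. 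Finally, your treatment of the cusped case is too quick: semi-ideal simplices have unbounded images, and the paper does not truncate to ``the same combinatorial triangulation'' but instead bounds the diameter of the image $\bar{X}$ of the non-ideal simplices and shows every systole lies within an explicitly bounded $d_0$-neighbourhood of $\bar{X}$, since far-away points lie in cusp neighbourhoods which contain no closed geodesics.
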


\begin{remark}
Note that for a non-orientable hyperbolic $n$-manifold, we can apply Theorem \ref{Thm1.1} to its orientable double cover and use that to bound the length of systoles in the original manifold. In doing so we double the number of tetrahedra and halve the bound on systole length. This difference disappears when we simplify big O notation however. 

Because of this, we shall from this point on assume all our manifolds to be orientable, and thus all our lattices shall lie in Isom$^+(\h^n)$.

\end{remark}
 
 It's worth noting that at first, one might think this theorem for $n \geq 4$ follows from Reznikov's result but his theorem produces a constant which varies with $n$ in an unknown way.

In dimension 3 Theorem \ref{Thm1.1} forms a crucial part of the  argument in an upcoming paper of the author \cite{Scull21} on a bounded runtime algorithm for the homeomorphism problem for hyperbolic 3-manifolds. That paper uses this same idea of finding a link between the combinatorial information of a triangulation and geometric information about the manifold and both papers are inspired by Kuperberg's work \cite{Kuperberg15} and his use of the results of Grigoriev and Vorobjov. 

In fact in dimension $3$ a lot more is known. Futer, Purcell and Schleimer \cite{FuterPurcellSchleimer19} prove a $3$-dimensional version of Theorem \ref{InjRadDiameter} which they prove to be sharp up to the additive constants and which is still linear in log$(1/R)$. Kalelkar and Raghunath \cite{KalelkarRaghunath20} also provide a result linking the number of tetrahedra in a triangulation of a cusped hyperbolic $3$-manifold $M$ to systole length. Their result gives much better bounds in terms of $t$ but requires that the original triangulation be geometric and also depends on the geometry of the triangulation, which is quite a strong restriction.  
  A closed 3-dimensional version of Theorem \ref{Thm1.1} follows from a combination of Theorem \ref{InjRadDiameter} (or indeed \cite{FuterPurcellSchleimer19}) and another result relating combinatorial and geometric properties of hyperbolic 3-manifolds, due to Matthew White \cite{White00a} (see also \cite{White00b} \cite{White02}).
\begin{theorem*}[Theorem 5.9, \cite{White00b}]
There is an explicit constant $K>0$ such that if $M$ is a closed, connected, hyperbolic 3-manifold, and 
\[ P= \langle x_1, \ldots, x_n \mid r_1, \ldots , r_m \rangle \]
 is a presentation of its fundamental group, then $diam(M) < K(l(P))$, where
 \[ l(P) =  \sum^m_{i=1} l(r_i) \]
 and $l(r_i)$ is the word length of a given relator.
\end{theorem*}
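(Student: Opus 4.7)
The plan is to realize any presentation $P$ as a continuous map from the standard $2$-complex $K_P$ into $M$, show this map has image that is dense in $M$ up to a uniform distance, and control the diameter of the image in terms of $l(P)$ and universal constants coming from hyperbolic geometry in dimension $3$.

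I would build $K_P$ in the usual way: a wedge of $n$ oriented circles $e_1,\dots,e_n$ with $m$ disks $F_1,\dots,F_m$ attached, where $\partial F_j$ traces the word $r_j$ and so has combinatorial length $l(r_j)$. Because $M$ is aspherical (having $\mathbb{H}^3$ as universal cover), any basepoint $p\in M$ together with loops at $p$ representing the generators $x_i$ extends to a continuous map $f\colon K_P\to M$ inducing an isomorphism on $\pi_1$. To avoid the problem that an arbitrary $x_i$ might correspond to a very long loop, I would next replace the generators by short loops: using the universal Margulis constant $\mu_3>0$ in dimension $3$, choose $p$ in the thick part of $M$ and a generating set $S_0$ for $\pi_1(M,p)$ whose elements correspond to loops of length bounded by a universal constant $C_0$ (for example from face pairings of short translation length in a Dirichlet domain at $\tilde p$, or from the nerve of a cover of the thick part by embedded balls of radius $\mu_3/4$). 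Re-express each $x_i$ as a word $w_i$ in $S_0$ and realize the $1$-skeleton of $K_P$ as the corresponding concatenation of short loops.

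Having controlled the $1$-skeleton image, I would fill each $2$-cell using the linear isoperimetric inequality in $\mathbb{H}^3$: a null-homotopic loop of length $L$ bounds a singular disk whose image has diameter at most $C_1 L$. Applying this to each $F_j$, whose boundary now has length at most $C_0$ times the combinatorial length of $r_j$ re-expressed in $S_0$, bounds $\operatorname{diam}(f(K_P))$ by a universal constant times $l(P)$.

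The main obstacle is the final density claim: that $f(K_P)$ is $C_2$-dense in $M$ for some universal $C_2$. Since $f$ induces an isomorphism on $\pi_1$ and $M$ is a $K(\pi,1)$, an embedded metric ball in $M\setminus f(K_P)$ of radius exceeding $C_2$ would produce a loop not factoring through the image, contradicting $\pi_1$-surjectivity; making $C_2$ uniform across all closed hyperbolic $3$-manifolds requires careful use of the Margulis constant $\mu_3$ to handle the thin parts (short geodesics with large Margulis tubes, whose cores must themselves be witnessed by some $w_i$ in the image). A secondary difficulty is to bound the word lengths $|w_i|$ of the original generators in $S_0$ solely in terms of $l(P)$; since any generator unused in the relators would yield a free factor of $\pi_1(M)$, which is impossible for a closed hyperbolic manifold, the $w_i$ can be controlled by the relator structure after suitable Tietze moves. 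Combining the diameter and density bounds yields $\operatorname{diam}(M)\le K\cdot l(P)$ for an explicit universal $K$.
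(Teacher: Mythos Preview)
This theorem is not proved in the present paper: it is quoted from White's thesis/preprint \cite{White00b} as background, and the paper offers no argument for it. So there is no ``paper's own proof'' to compare against; the relevant question is whether your sketch stands on its own.

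It does not, and the gap is exactly the step you flag as a ``secondary difficulty'' but do not close. You begin with an \emph{arbitrary} presentation $P=\langle x_1,\dots,x_n\mid r_1,\dots,r_m\rangle$, then propose to rewrite each $x_i$ as a word $w_i$ in a geometrically short generating set $S_0$. But the length $|w_i|$ depends on the geometry of $M$ (essentially on its diameter, via the word metric for $S_0$), not on $l(P)$. Your isoperimetric bound on $\operatorname{diam}(f(K_P))$ is then $C_1 C_0\sum_j l(r_j^{\,\text{rewritten}})$, which is controlled by $\sum_j l(r_j)\cdot\max_i|w_i|$, and the second factor is precisely what you are trying to bound. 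The remark that a generator not appearing in any relator would give a free factor only shows each $x_i$ occurs in some $r_j$; it gives no control over $|w_i|$. This is genuine circularity, not a technicality.

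The density step has the same flavor of problem. A ball disjoint from $f(K_P)$ is simply connected, so it does not by itself produce a loop ``not factoring through the image''; $\pi_1$-surjectivity is a statement about homotopy classes, not about which points the map hits. In a Margulis tube of very large radius, the core curve is homotopic to a word in your generators, but that homotopy can push the loop all the way to the boundary torus, so nothing forces $f(K_P)$ to come within a universal distance of the core. White's argument has to engage directly with the tube geometry (relating core length, tube radius, and the combinatorics of how relators traverse the tube) rather than appeal to a soft density statement; your sketch does not supply that mechanism.
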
 

The 3-dimensional version of Theorem \ref{Thm1.1} that follows from this combination has a tighter bound on systole length of the form

\[ R \geq \frac{1}{2^{O(t)} }.\]
The proof follows from considering a natural presentation of the fundamental group induced by the given triangulation. For further examples of low-dimensional results in this area, see Adams and Reid \cite{AdamsReid00}, Kalelkar and Phanse \cite{KalelkarPhanse19} or the aforementioned work of Agol \cite{Agol06}.

It's also worth noting that in dimension two such a result does not hold, in fact  a given surface of negative euler characteristic admits hyperbolic structures with arbitrarily small curves. This is shown in any introductory text on Teichm\"uller space, see for example \cite{FarbMargalit11} \cite{Martelli16}.


\subsection{Sketch of the Paper}
 
 In Section \ref{SecPolynomial} we recount the work of Grigoriev and Vorobjov which bounds the size of a solution to a system of polynomials as a function of the size of the system. In Section \ref{SecHomEq} we use the fact that cocycles into $Isom^+(\h^n)$ are solutions to some natural system of polynomials and the results of Section \ref{SecPolynomial} to find a cocycle which is ``bounded". This cocycle corresponds to some faithful lattice representation of the fundamental group of a given $n$-manifold which admits a hyperbolic structure. We can then use this cocycle to define a homotopy equivalence from a triangulation of this manifold to some specific hyperbolic $n$-manifold $X$. This homotopy equivalence has as its image a union of geometric simplices, and the bounds on the cocycle control the geometry of these simplices, in particular their diameters are bounded.
 
In Section \ref{SecInjRad} we use Margulis' Lemma to calculate a relationship between a systole's length and the diameter of the Margulis tube containing the systole. We then use the bounds on the geometry of the simplices to bound the diameter of a closed hyperbolic manifold $X$ and hence to bound the length of its systoles.

In Section \ref{SecCusp} we adapt some of the results used above to the finite volume case and apply the same result on Margulis tubes. To do this we find a bound on the diameter of a subset of the manifold which contains all the Margulis tubes.


\section{Systems of Polynomial Inequalities} \label{SecPolynomial}

What we present here is adapted from \cite{Grigoriev86} (which is a survey of the results in \cite{GrigorievVorobjov88}) with a few corollaries which show how to adapt the theorem to our situation. This section shows how given a system of polynomial inequalities, one can find solutions which are bounded in terms of the `size' of the system. \\ \medskip
Let a system of polynomial inequalities
\begin{equation*}
f_1 > 0, \ldots f_m > 0, f_{m+1} \geq 0, \ldots , f_{\kappa} \geq 0
\end{equation*}
be given, where the polynomials $f_i \in \Q[X_1, \ldots, X_N]$ satisfy the bounds
\begin{equation*}
    deg_{X_1,\ldots,X_N}(f_i) < d, \qquad l(f_i) < M, \qquad 1 \leq i \leq \kappa.
\end{equation*}
Here $deg_{X_1,\ldots,X_N}(f_i)$ is the maximum degree of a monomial in $f_i$, where the degree of a monomial is the sum of the exponents of the monomial. The length, or complexity $l$ is defined on rational numbers by $l (\frac{p}{q}) = log_2(\lvert pq \rvert +2)$ and on polynomials it is defined as the maximum complexity among its coefficients.

Let $\alpha = (\alpha_1, \ldots, \alpha_N)$ be a solution to the system of inequalities where each $\alpha_i$ is an algebraic number. Then then we can represent each $\alpha_i$ in terms of a primitive element, $\theta$, of the field $\Q (\alpha_1, \ldots, \alpha_N) = \Q(\theta)$. We represent $\theta$ by providing an irreducible polynomial $\Phi (X) \in \Q (X)$ of which $\theta$ is a root and an interval $(\beta_1, \beta_2) \subseteq \Q$ with endpoints in $\Q$ which determines $\theta$ among the roots of $\Phi$. With $\theta$ defined, one has $\alpha_i = \sum_j \alpha_i^{(j)} \theta^j$ for $\alpha_i^{(j)} \in \Q$. Using this notation we can now state the theorem of Grigoriev.

\begin{theorem}[Grigoriev \cite{Grigoriev86}] \label{Grigoriev}

For a given system of inequalities such as those above, for each connected component of the solution set, there exists a solution $\alpha = (\alpha_1, \ldots, \alpha_N) $ represented as above for which the following are true:
\begin{center}

$deg(\Phi) \leq (\kappa d )^{O(N)}; \qquad l(\Phi), l(\alpha_j^{(i)}), l(\beta_1), l(\beta_2) \leq M (\kappa d)^{O(N)}$
\end{center}
\end{theorem}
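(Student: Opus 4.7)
The plan is to prove the theorem by the critical-point method from effective real algebraic geometry, which reduces the problem to the zero-dimensional case where classical elimination theory gives the bounds.

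First I would produce a finite set of ``witness'' points, one per connected component of the solution set $S$, each satisfying a zero-dimensional polynomial system of comparable size. The standard device is to replace strict inequalities $f_i>0$ by $f_i\geq \varepsilon$ for a formal infinitesimal $\varepsilon>0$ (to stay inside the same component while working with a closed semi-algebraic set) and then, on each component $C$, pick a point minimising the squared distance $\sum X_j^2$ to the origin. Such a minimum exists after compactifying (e.g.\ by intersecting with a large ball, adding an auxiliary variable that enforces boundedness in a way that does not enlarge $d$, $\kappa$, or $M$ by more than a constant), and at such a point the Lagrange conditions give $N$ polynomial equations of degree $<d$ and length $<M+O(1)$. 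Hence on each component we can replace the inhomogeneous system by a finite system $g_1=\cdots=g_N=0$ together with the original sign conditions.

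Next I would convert this zero-dimensional system into a univariate primitive-element representation. Choose a generic linear form $\theta=c_1 X_1+\cdots+c_N X_N$ with small integer coefficients $c_i$; by a standard Shape Lemma / Kronecker argument one can choose such $c_i$ of bit-length $O(\log(\kappa d))$ so that $\theta$ separates all complex solutions of the zero-dimensional system. Let $\Phi$ be the minimal polynomial of $\theta$ over $\mathbb{Q}$, obtained as the squarefree part of the iterated resultant
\begin{equation*}
\mathrm{Res}_{X_1}\bigl(\cdots \mathrm{Res}_{X_N}(g_1,\ldots,g_N,\,T-\theta)\cdots\bigr).
\end{equation*}
Bezout's theorem bounds the number of common zeros by $d^N$, so $\deg(\Phi)\leq d^N\leq (\kappa d)^{O(N)}$, giving the first bound. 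To recover each $\alpha_i$ as $\sum_j \alpha_i^{(j)}\theta^j$ one uses the standard rational univariate representation: the $\alpha_i^{(j)}$ are read off from auxiliary resultants of the form $\mathrm{Res}(\ldots, T-\theta, Y-X_i)$ and lie in $\mathbb{Q}(\theta)$.

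The main obstacle, and the technically heaviest step, is controlling the \emph{length} (bit-size) through these eliminations. Each resultant step can square the degree and multiply bit-sizes by the degrees of the other inputs, so a naive bound blows up too fast. The key tool is Mignotte/Hadamard-style height bounds: the resultant of two polynomials of degree $\leq d$ and length $\leq L$ in several variables has length $O(dL)$, and iterating through $N$ variables gives a total length bound of $M\cdot(\kappa d)^{O(N)}$ for the coefficients of $\Phi$ and of the $\alpha_i^{(j)}$. Finally, to determine $\theta$ among the roots of $\Phi$ we apply Mahler's root-separation bound: two distinct roots of $\Phi$ differ by at least $\deg(\Phi)^{-O(\deg(\Phi))}\cdot 2^{-O(l(\Phi)\deg(\Phi))}$, so an isolating rational interval $(\beta_1,\beta_2)$ with endpoints of length $M(\kappa d)^{O(N)}$ suffices. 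Combining these, both bounds in the theorem follow. I expect the careful bookkeeping of coefficient growth across iterated resultants, together with the Mignotte root-separation estimate for $(\beta_1,\beta_2)$, to be the real work; the geometric reduction to a zero-dimensional system is conceptually clean but algebraically delicate to make fully effective.
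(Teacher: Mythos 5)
A point of order first: the paper does not prove this statement at all. It is quoted as a theorem of Grigoriev and Grigoriev--Vorobjov (\cite{Grigoriev86}, \cite{GrigorievVorobjov88}) and used as a black box, so there is no proof in the paper to compare yours against. Judged on its own terms, your sketch follows the right family of ideas --- the critical-point method with infinitesimal deformations, reduction to a zero-dimensional witness system, then a primitive-element (univariate) representation --- which is indeed how results of this type are established in Grigoriev--Vorobjov and in later treatments (Renegar, Basu--Pollack--Roy). But as written it has genuine gaps.

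The most serious is the elimination step. Iterated univariate resultants square degrees at each elimination, so after $N$ steps the degree is of order $d^{2^N}$, not $d^N$; the bound you would get this way is $(\kappa d)^{2^{O(N)}}$, which loses the single-exponential character that the theorem asserts and that the rest of the paper depends on. Moreover, the displayed expression $\mathrm{Res}_{X_1}\bigl(\cdots\mathrm{Res}_{X_N}(g_1,\ldots,g_N,\,T-\theta)\cdots\bigr)$ is not well defined: the resultant of $N+1$ polynomials with respect to a single variable is not a pairwise resultant. To keep single-exponential degree and height bounds one must eliminate all variables simultaneously --- via the multivariate $u$-resultant/Macaulay-type generalized characteristic polynomial or a Kronecker-style geometric resolution --- and controlling degrees and coefficient lengths through that step is precisely where the real content of Grigoriev--Vorobjov lies. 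Secondary gaps: critical points of the distance to the origin need not be isolated (a sphere centred at the origin already defeats this), so a generic choice of objective or a further deformation is needed before the Lagrange system can be claimed zero-dimensional; for unbounded components you cannot simply intersect with ``a large ball'' --- the admissible radius must itself be certified, which is part of what is being proved; and after introducing the infinitesimal $\varepsilon$ your witness point has coordinates in a real closed extension (algebraic Puiseux series), so you must still specialize $\varepsilon$ to an explicit positive rational of bounded bit-size to obtain a genuine solution in the original component. Each of these is a quantitative step of the same difficulty as the theorem itself, not routine bookkeeping.
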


The following corollary follows the same method of proof as that of Lemma 8.9 in \cite{Kuperberg15}, but for self-containedness, we provide it here.
\begin{corollary}
$\frac{1}{2^{M (\kappa d)^{O(N)}}} \leq \lvert \theta \rvert \leq 2^{M (\kappa d)^{O(N)}}$
\end{corollary}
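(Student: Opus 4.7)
The plan is to derive both inequalities directly from the explicit bounds on $\Phi$ and on the interval $(\beta_1, \beta_2)$ supplied by Theorem \ref{Grigoriev}. The upper bound will be essentially immediate from the interval containment, while the lower bound will come from a classical Cauchy-type estimate on how close a nonzero root of a polynomial can lie to the origin, combined with the complexity bounds on the coefficients of $\Phi$.

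For the upper bound, I would use that $\theta \in (\beta_1, \beta_2) \subseteq \Q$, so $\lvert \theta \rvert \leq \max(\lvert \beta_1 \rvert, \lvert \beta_2 \rvert)$. Writing each $\beta_j = p_j/q_j$ in lowest terms, the hypothesis $l(\beta_j) \leq M(\kappa d)^{O(N)}$ says $\log_2(\lvert p_j q_j \rvert + 2) \leq M(\kappa d)^{O(N)}$, so in particular $\lvert p_j \rvert \leq 2^{M(\kappa d)^{O(N)}}$, and therefore $\lvert \beta_j \rvert \leq \lvert p_j \rvert \leq 2^{M(\kappa d)^{O(N)}}$.

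For the lower bound, one may assume $\theta \neq 0$: if the primitive element is zero then $\Q(\alpha_1, \ldots, \alpha_N) = \Q$ and one is free to replace $\theta$ by any nonzero rational, for example $1$, without disturbing the bounds. With $\theta \neq 0$, irreducibility of $\Phi(X) = \sum_{i=0}^{D} a_i X^i$, where $D = \deg \Phi \leq (\kappa d)^{O(N)}$, forces the constant term $a_0$ to be nonzero. I would then invoke Cauchy's root bound in the form
\[ \lvert \theta \rvert \geq \frac{\lvert a_0 \rvert}{\lvert a_0 \rvert + \max_{i \geq 1} \lvert a_i \rvert}, \]
which is obtained by applying Cauchy's standard upper bound for the moduli of roots to the reversed polynomial $X^D \Phi(1/X)$, whose nonzero roots are precisely the reciprocals of the nonzero roots of $\Phi$.

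To close out, I would plug in the complexity bounds: $l(\Phi) \leq M(\kappa d)^{O(N)}$ gives $\lvert a_i \rvert \leq 2^{M(\kappa d)^{O(N)}}$ for every coefficient, and writing $a_0 = p_0/q_0$ in lowest terms with $\lvert p_0 \rvert \geq 1$ gives $\lvert a_0 \rvert \geq 1/\lvert q_0 \rvert \geq 2^{-M(\kappa d)^{O(N)}}$. Substituting into the Cauchy bound, the numerator is at least $2^{-M(\kappa d)^{O(N)}}$ and the denominator is at most $2 \cdot 2^{M(\kappa d)^{O(N)}}$, yielding $\lvert \theta \rvert \geq 2^{-M(\kappa d)^{O(N)}}$ after absorbing the additive constants into the $O$-notation. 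The whole argument is essentially bookkeeping; the only thing to watch is that all contributions to the exponent combine additively so that the shape $M(\kappa d)^{O(N)}$ is preserved, and there is no real obstacle here.
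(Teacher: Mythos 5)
Your argument is correct, but it does not follow the paper's route. The paper proves both bounds from coefficient estimates on $\Phi$ alone: it first clears denominators so that $\Phi$ has integer coefficients $\gamma_i$ with $\lvert \gamma_i\rvert \leq 2^{l(\Phi)}$, bounds $\lvert\theta\rvert$ above by $\sum_i \lvert\gamma_i\rvert \leq \deg(\Phi)\,2^{l(\Phi)}$ using the root relation $\Phi(\theta)=0$, and gets the lower bound by applying the same estimate to the reversed polynomial $\Psi(x)=x^{\deg\Phi}\Phi(x^{-1})$, of which $\theta^{-1}$ is a root; Theorem \ref{Grigoriev} then controls $\deg(\Phi)$ and $l(\Phi)$. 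Your lower bound is essentially the same reversal trick, just phrased through the Cauchy bound $\lvert\theta\rvert \geq \lvert a_0\rvert/(\lvert a_0\rvert+\max_{i\geq 1}\lvert a_i\rvert)$ and carried out with rational coefficients directly, which costs you the extra step of bounding $\lvert a_0\rvert$ from below via its denominator but spares the normalisation to integer coefficients. Your upper bound, by contrast, is genuinely different: you read it off from the isolating interval, using $\theta\in(\beta_1,\beta_2)$ and the bounds $l(\beta_1), l(\beta_2)\leq M(\kappa d)^{O(N)}$, which the paper never touches. That version is arguably cleaner, since it sidesteps the coefficient-sum root estimate altogether (as written in the paper, the inequality $\lvert\theta\rvert\leq\sum_i\lvert\gamma_i\rvert$ really needs the standard ``$1+\max$'' form or a case split on $\lvert\theta\rvert\leq 1$, a point your Cauchy formulation handles correctly), though it does lean on the convention that the interval in the representation actually contains $\theta$. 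You are also more careful than the paper about the degenerate cases: you note that irreducibility forces the constant term to be nonzero once $\theta\neq 0$, and you dispose of $\theta=0$ by replacing the primitive element with $1$, points the paper's proof silently assumes.
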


\begin{proof}
Note that we can scale $\Phi$ so that it is an integer polynomial without negating the statement that $l(\Phi)\leq M (\kappa d)^{O(N)}$. So we may assume that the coefficients, $\gamma_i$ of $\Phi$ are integers, and hence the $\gamma_i$ have size bounded by $2^{l(\Phi)} -2$
Now note that as $\Phi (\theta) = 0$ we get that 
\[ -\gamma_{deg(\Phi)} \theta^{deg(\Phi)} = \gamma_0 + \ldots +\gamma_{deg(\Phi)-1}\theta^{n-1} \]
and so 
\[ \lvert \theta \rvert = \left\lvert \frac{\gamma_0 + \ldots +\gamma_{deg(\Phi)-1}\theta^{n-1} }{\theta^{n-1}\gamma_{deg(\Phi)}} \right\rvert  \leq \sum_{i=0}^{deg(\Phi)-1} \lvert \gamma_i \rvert\ \] 
Note that $\theta^{-1}$ is a root of 
\[ \Psi(x) = x^n \gamma_0 + \ldots + \gamma_{deg(\Phi)} = x^n \Phi(x^{-1}) \]
and so 
\[ \frac{1}{\lvert \theta \rvert} = \lvert \theta^{-1} \rvert \leq \sum_{i=0}^{deg(\Phi)} \lvert \gamma_i \rvert .\]
Applying the bounds on the $\gamma_i$ gives
\begin{equation*}
\lvert \theta \rvert \leq  \sum_{i=0}^{deg(\Phi)-1} \gamma_i \leq \sum_{i=0}^{deg(\Phi)-1} 2^{l(\Phi)} \leq deg(\Phi) 2^{l(\Phi)}
\end{equation*}
and similarly,
\begin{equation*}
\lvert \theta \rvert \geq  \frac{1}{deg(\Phi) 2^{l(\Phi)}}
\end{equation*}
Finally, Theorem \ref{Grigoriev} tells us that 
\begin{equation*}
    deg(\Phi) 2^{l(\Phi)} \leq (\kappa d )^{O(N)} 2^{M (\kappa d)^{O(N)}} \leq 2^{M (\kappa d)^{O(N)}}
\end{equation*}
and the statement follows.
\end{proof}
From this, we can derive a statement about the size of the solutions themselves.

\begin{corollary} \label{variablebound}
Let $\alpha$ be as in Theorem \ref{Grigoriev}, then for each $i$, if $\alpha_i \neq 0$
\begin{equation*}
    \frac{1}{2^{M ((\kappa+2N) d)^{O(N)}}} \leq \lvert \alpha_i \rvert \leq 2^{M (\kappa d)^{O(N)}} .
\end{equation*}
\end{corollary}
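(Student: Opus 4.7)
The plan is to establish the upper and lower bounds separately by applying Theorem~\ref{Grigoriev} and the preceding corollary to two related systems.

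For the upper bound I would work directly from the representation $\alpha_i = \sum_j \alpha_i^{(j)}\theta^j$. Theorem~\ref{Grigoriev} gives $\deg(\Phi) \leq (\kappa d)^{O(N)}$ and $l(\alpha_i^{(j)}) \leq M(\kappa d)^{O(N)}$, so each coefficient satisfies $|\alpha_i^{(j)}| \leq 2^{M(\kappa d)^{O(N)}}$, while the preceding corollary gives $|\theta| \leq 2^{M(\kappa d)^{O(N)}}$. Taking the triangle inequality over the at most $(\kappa d)^{O(N)}$ terms of the sum, raising $|\theta|$ to powers up to $\deg(\Phi)$, and absorbing the resulting polynomial-in-$(\kappa d)^{O(N)}$ factor into the big-$O$ in the exponent all combine cleanly to give $|\alpha_i| \leq 2^{M(\kappa d)^{O(N)}}$.

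For the lower bound the trick is to augment the original system with $N$ new variables $Y_1,\ldots,Y_N$ and the $2N$ inequalities $X_iY_i - 1 \geq 0$ and $1 - X_iY_i \geq 0$ for each $i$. Any real solution of this augmented system has $Y_i = 1/X_i$, so solutions of the augmented system correspond to solutions of the original system having no zero coordinate. The augmented system has $2N$ variables, $\kappa + 2N$ inequalities, and degree bound $d$ (assuming the mild $d \geq 2$). Applying the upper-bound argument above to this augmented system but to the variable $Y_i$ gives $|Y_i| = |1/\alpha_i| \leq 2^{M((\kappa+2N)d)^{O(2N)}} = 2^{M((\kappa+2N)d)^{O(N)}}$, and inverting yields the claimed lower bound.

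The main subtlety is reconciling the phrase ``Let $\alpha$ be as in Theorem~\ref{Grigoriev}'' with the fact that the two bounds arise from applying Theorem~\ref{Grigoriev} to different systems, which in general produce different distinguished solutions in a given connected component. The cleanest reading is that $\alpha$ is obtained by applying Theorem~\ref{Grigoriev} to the augmented system and projecting to the first $N$ coordinates; this $\alpha$ is still a valid solution of the original system, its $Y_i$-coordinates from the augmented solution supply the lower bound, and the upper bound follows either from the direct argument applied to $\alpha$ viewed in the original system or from the augmented analysis with the slightly weaker form $M((\kappa+2N)d)^{O(N)}$, which is asymptotically comparable to $M(\kappa d)^{O(N)}$ in the applications of the paper.
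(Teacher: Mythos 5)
Your proof is correct and follows essentially the same route as the paper: the upper bound by the triangle inequality applied to $\alpha_i=\sum_j\alpha_i^{(j)}\theta^j$ together with the bounds on $\deg(\Phi)$, $l(\alpha_i^{(j)})$ and $\lvert\theta\rvert$, and the lower bound by adjoining a reciprocal variable for each $X_i$ via the relation $X_iY_i=1$ (two inequalities and one new variable per original variable, so $\kappa\mapsto\kappa+2N$ and $O(2N)=O(N)$), then inverting. Your closing observation about which distinguished solution $\alpha$ the statement refers to is a genuine subtlety the paper leaves implicit, and your resolution (take the solution produced for the augmented system and project) is the natural one.
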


\begin{proof}
First see that we can achieve an upper bound on $\lvert \alpha_i \rvert$ by applying the triangle inequality in the following manner:
\begin{equation*}
    \lvert \alpha_i \rvert 
    \leq \lvert \sum_{j=0}^{deg(\Phi)}\alpha_i^{(j)} \theta^j \rvert 
    \leq \sum_{j=0}^{deg(\Phi)}\lvert \alpha_i^{(j)} \rvert \lvert \theta^j \rvert 
    \leq \sum_{j=0}^{deg(\Phi)} 2^{M (\kappa d)^{O(N)}} 2^{M (\kappa d)^{O(N)}} 
\end{equation*}
\begin{equation*}
    \leq ((\kappa d)^{O(N)}+1)  2^{M (\kappa d)^{O(N)}} 2^{M (\kappa d)^{O(N)}}
    \leq  2^{M (\kappa d)^{O(N)}} .
\end{equation*}
Note that a lower bound on $\lvert \alpha_i\rvert$ is equivalent to an upper bound on $\lvert \alpha_i^{-1} \rvert$, so we can modify our system of polynomial inequalities by adding some $\beta_i$ such that $\alpha_i \beta_i = 1$ for all $i$. This is represented by two inequalities and one new variable for each of the original variables. Now applying the theorem to this new system of inequalities gives an upper bound for $\lvert \beta_i \rvert$ as above except $\kappa$ is replaced by $\kappa +2N$, as $O(2N)$ and $ O(N)$ are the same. Taking reciprocals of both sides gives us the lower bound in the statement.
\end{proof}

Before we move on we should note that the theorem requires that the system be given as polynomial inequalities. When we create our system, we shall allow for equalities as well, which can be replaced by a pair of inequalities. Because of big O notation, doubling the number of polynomials will not affect the final bound and so we shall ignore this distinction from here on.

\section{Cocycles and Homotopy Equivalences} \label{SecHomEq}

\begin{defn}[Triangulation]
A triangulation of an $n$-manifold $M$ is a simplicial complex equipped with a homeomorphism to $M$.
\end{defn}

\begin{remark}
The results of this paper also hold for a more general definition of triangulation often used by low dimensional topologists, at least in dimension $3$. In higher dimensions, one tends to need to introduce further requirements to avoid pathologies, and so we shall stick to the simplicial setting. For a more detailed discussion see Thurston's definition of a rectilinear gluing and further comments in  \cite{Thurston97}. 
In support of this remark efforts have been made throughout the paper to avoid the use of certain properties of simplicial complexes that aren't shared by more general triangulations, for example that two $n$-simplices intersect in at most one face.
\end{remark}

\begin{defn}[Star and Link]
The star st$(\Delta)$ of a simplex $\Delta$ in a simplicial complex $K$ is the smallest subcomplex of $K$ containing all simplices which have $\Delta$ as a face.
The link lk$(\Delta)$ of a simplex $\Delta$ in a simplicial complex $K$ consists of all simplices in st$(\Delta)$ which do not contain $\Delta$.
\end{defn}

\begin{defn}[Simplicial Path]
A sequence of oriented 1-simplices in a triangulation such that the end vertex of each simplex is the start vertex of the next is called a simplicial path.
\end{defn}

\begin{defn}
Let $M$ be a triangulated $n$-manifold  and $\Lambda$ a group.
A cocycle $\alpha \in C^1(M,\Lambda)$ is a map from the oriented $1$-simplices of $M$ to $\Lambda$ which satisfies the following conditions

\begin{itemize}
\item For each 2-simplex of $M$, let $a,b,c$ be the three edges of the simplex oriented such that $a$ and $c$ start at the same vertex, and $b$ and $c$ end at the same vertex, then $\alpha(a) \alpha(b) =  \alpha(c)$ 
\item if $\bar{a}$ is the same edge as $a$ with the opposite orientation then $\alpha (a)^{-1} =  \alpha (\bar{a})$
\end{itemize}
\end{defn}

\begin{remark}
We can extend $\alpha$ to a map from all simplicial paths $\gamma = e_1 \ldots e_n$ by defining
\[ \alpha(\gamma) = \alpha (e_1) \ldots \alpha(e_n) \]
and then because every homotopy of simplicial paths can be realised as a sequence of homotopies across 2-simplices, $\alpha$ is invariant under homotopy fixing end points. This means that $\alpha$ induces a homomorphism $\alpha_* : \pi_1(M) \rightarrow \Lambda$.
\end{remark}

\begin{defn}[Semi-ideal $n$-simplex]
A semi-ideal $n$-simplex is one which has had one of its vertices removed. We call this vertex ideal. 
\end{defn}

\begin{defn}[Semi-Ideal Triangulation]
Let $M$ be a finite volume hyperbolic $n$-manifold, then a semi-ideal triangulation of $M$ is a simplicial complex where we remove some of the vertices before defining a homeomorphism to $M$.
\end{defn}

\begin{convention}
In the case of a semi-ideal triangulation of a manifold $M$, a cocycle $\alpha \in C^1(M,\Lambda)$ shall take as inputs only the non-ideal edges of the triangulation. This is justified as the semi-ideal triangulation deformation retracts to its non-ideal simplices.
\end{convention}

\begin{defn}[G-equivariant]
Let $X, Y$ be $G$-sets for $G$ a group. Let $F: X \rightarrow Y$ be a map such that
\[ \forall g \in G, \forall x \in X, \qquad F(g\cdot x) = g \cdot F(x) \]
Then we say  $F$ is $G$-equivariant and $F$ induces a map $\hat{F} : G \backslash X \rightarrow G \backslash Y$.
\end{defn}

\begin{defn}
Let $\tilde{M}$ be the universal cover of $M$ and let $\phi$ be the canonical isomorphism $\pi_1(M, x_0) \rightarrow Deck(\tilde{M}, \tilde{x_0})$ defined by picking as a base point the lift $\tilde{x_0}$ of $x_0$, then for each $[\gamma] \in \pi_1(M, x_0)$ we define $g_\gamma : = \phi([\gamma])$ .
\end{defn}



Let $M$ be a $n$-manifold admitting a finite volume hyperbolic structure equipped with a triangulation (semi-ideal if the manifold is not closed) and let $X = G \backslash\h^n $  be a hyperbolic $n$-manifold homeomorphic to $M$, so $G \leq Isom^+(\h^n)$. Let $f$ be a homeomorphism $f : M \rightarrow X$, then $f$ induces a $G$-equivariant homeomorphism $\tilde{f}: \tilde{M} \rightarrow \h^n$ because the action of $G$ on $\tilde{M}$ is defined by following the chain of isomorphisms 
\[ G \cong \pi_1(X) \xleftarrow{f_*}\pi_1(M) \cong Deck(\tilde{M}).\]

\begin{lemma}\label{HomotopyEquiv}
Let $M, X, G$ be as in the preceeding paragraph. Let $\alpha$ be a cocycle such that $\alpha_* : \pi_1(M, x_0) \rightarrow Isom^+(\h^n)$ is a representation of $\pi_1(M)$ with Im$(\alpha_*) = G$. Then we can define a map $\tilde{F} : \tilde{M} \rightarrow \h^n$ such that for each (possibly semi-ideal) $n$-simplex $\Delta$ in $\tilde{M}$, $\tilde{F} (\Delta)$ is a geodesic $n$-simplex (it is the convex hull of its vertices) and $\tilde{F}$ induces a homotopy equivalence $F : M \rightarrow X$. Furthermore, the map which $F$ induces on fundamental groups is exactly $\alpha_*$.
\end{lemma}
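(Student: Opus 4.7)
My plan is to construct $\tilde F$ by first specifying its values on the vertex set of $\tilde M$ using the cocycle, then extending across each simplex as a geodesic convex hull, and finally verifying $\pi_1(M)$-equivariance so it descends to $F\colon M\to X$ and is a homotopy equivalence.

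Fix a non-ideal vertex $v_0$ of $M$, a lift $\tilde v_0\in\tilde M$, and any point $p_0\in\h^n$. For each non-ideal vertex $\tilde v$ of $\tilde M$, choose a simplicial path $\tilde\gamma$ in the $1$-skeleton from $\tilde v_0$ to $\tilde v$, let $\gamma$ be its projection to $M$, and set
\[ \tilde F(\tilde v) := \alpha(\gamma)\cdot p_0. \]
This is independent of $\tilde\gamma$: any two such paths are related in the simply connected $\tilde M$ by a finite sequence of $2$-simplex homotopies, which project to $2$-simplex homotopies in $M$ and hence preserve $\alpha$ by the cocycle condition. For each ideal vertex $\tilde v$, its stabiliser in $\pi_1(M)$ is a cusp subgroup, which under $\alpha_*$ lands in a parabolic subgroup of $G$; this parabolic subgroup fixes a unique point on $\partial\h^n$, and I set $\tilde F(\tilde v)$ to be that fixed point.

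Extend $\tilde F$ across each (possibly semi-ideal) $n$-simplex $\Delta$ of $\tilde M$ by mapping $\Delta$ continuously onto the geodesic convex hull of the images of its vertices in $\h^n\cup\partial\h^n$; by construction this is a geodesic simplex. For $\pi_1(M)$-equivariance at a non-ideal vertex $\tilde v$, let $g_\eta$ be the deck transformation for $[\eta]\in\pi_1(M,v_0)$, lift $\eta$ to a path from $\tilde v_0$ to $g_\eta\cdot\tilde v_0$, and concatenate this lift with $g_\eta\cdot\tilde\gamma$ to obtain a path from $\tilde v_0$ to $g_\eta\cdot\tilde v$ whose projection to $M$ is $\eta\cdot\gamma$. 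Therefore
\[ \tilde F(g_\eta\cdot\tilde v) = \alpha_*(\eta)\,\alpha(\gamma)\,p_0 = \alpha_*(\eta)\,\tilde F(\tilde v). \]
At an ideal vertex the same conclusion follows since parabolic fixed points transform equivariantly under conjugation of their stabilisers. Because $G$ acts on $\h^n$ by isometries, the geodesic convex hull commutes with the action, so the extension is equivariant and descends to a continuous $F\colon M\to X$.

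Finally, by construction $F_*=\alpha_*$, which is an isomorphism of $\pi_1(M)$ onto $G=\pi_1(X)$. Since $M$ and $X$ are both $K(\pi,1)$ spaces (their universal cover $\h^n$ is contractible), any $\pi_1$-isomorphism between them is induced by a homotopy equivalence, so $F$ itself is one. The main obstacle is the bookkeeping at the ideal vertices: one must check that $\alpha_*$ actually sends cusp subgroups of $\pi_1(M)$ into parabolic subgroups of $G$ (using that $G$ is the holonomy of a hyperbolic structure homeomorphic to $M$) and that the resulting assignment of parabolic fixed points is globally equivariantly consistent with the cocycle-based definition on the non-ideal part of $\tilde M$.
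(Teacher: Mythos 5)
Your construction of $\tilde F$ on the non-ideal vertices (cocycle values along simplicial paths, well-definedness via homotopies across $2$-simplices, equivariance from concatenation) is the same as the paper's. Two steps genuinely differ. At the ideal vertices you define $\tilde F(\tilde v)$ as the fixed point at infinity of $\alpha_*(\mathrm{Stab}(\tilde v))$, whereas the paper chooses spanning trees of the links, transports a chosen parabolic fixed point along paths, and then checks well-definedness and equivariance by hand; your formulation is cleaner, since uniqueness of the parabolic fixed point makes both checks automatic, but it rests on the same unproved assertion the paper also makes, namely that $\alpha_*$ sends cusp subgroups to parabolic subgroups (in the paper's later applications this is guaranteed by the parabolicity constraints in the polynomial system for $n=3$ and by Theorem \ref{GarlandRaghunathan} for $n\geq 4$, so flagging it as you do is reasonable). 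For the homotopy equivalence you appeal to asphericity of $M$ and $X$ and Whitehead's theorem, while the paper lifts the given homeomorphism $f\colon M\to X$, pushes it $G$-equivariantly to agree with $\tilde F$ on vertices, and writes down an explicit equivariant straight-line homotopy, concluding that $F$ is homotopic to a homeomorphism. Your route is more standard and shorter; the paper's buys the stronger statement ``$F$ is homotopic to a homeomorphism $F'$ by a homotopy fixing ideal vertices,'' which is exactly what is reused in Lemma \ref{CuspedSurjection}, so replacing the paper's argument wholesale would require reworking that later lemma.

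There is one genuine gap in your argument: Whitehead's theorem needs $F_*=\alpha_*$ to be an \emph{isomorphism} onto $\pi_1(X)$, but the hypothesis only says $\mathrm{Im}(\alpha_*)=G$; you assert injectivity without justification. It can be supplied: since $X$ is homeomorphic to $M$ we have $G\cong\pi_1(M)$, and lattices in $Isom(\h^n)$ are finitely generated and residually finite, hence Hopfian, so the surjection $\alpha_*\colon\pi_1(M)\to G$ is automatically an isomorphism (alternatively, note that the lemma is only ever applied to cocycles inducing faithful lattice representations). A smaller point: ``map $\Delta$ continuously onto the geodesic convex hull of the images of its vertices'' does not by itself give a map that descends; as in the paper you should pick one representative simplex per $G$-orbit, extend inductively over skeleta so that the maps agree on shared faces, and then transport by the action. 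The fact that the convex hull is carried to the convex hull by isometries makes this possible, but it does not by itself single out an equivariant, face-compatible parametrization.
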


\begin{proof}
%
First suppose that $M$ is closed. 
Let $\tilde{x_0} \in \tilde{M}$ be some lift of $x_0$ and take $\tilde{F}(\tilde{x_0}) = \tilde{y_0}$ some arbitrary point in $\h^n$. Now let $\tilde{x}$ be any other vertex in the $0$-skeleton of $\tilde{M}$ and $\tilde{\gamma}$ a simplicial path from $\tilde{x_0}$ to $\tilde{x}$. Let $\gamma$ be the projection of $\tilde{\gamma}$ to $M$, then define $\tilde{F}(\tilde{x}) = \alpha(\gamma)\cdot \tilde{y_0}.$ 
Assuming that $\tilde{F}$ is $G$-equivariant on the vertices, we define $\tilde{F}$ dimension by dimension, first we map the 1-simplices parameterised by arc-length onto the geodesic between the image of their vertices. For higher dimensional simplices, pick one representative of each $G$-orbit, and define inductively on dimension by picking for each $k$-simplex representative some homeomorphism onto the convex hull of the image of its vertices which is an extension of the homeomorphism defined inductively on its boundary. We then define $\tilde{F}$ on all the translates of our simplex by $\tilde{F} (g(x)) = \alpha_*(g) \tilde{F}(x)$ for all $x$ in our chosen representative.

By construction, $\tilde{F}$ is $G$-equivariant iff the map on the $0$-skeleton is. Let $\tilde{x}$ be as above, then $\tilde{x} = g_\gamma (\tilde{x_0})$. Take $g_{\gamma'}$ some other element of Deck$(\tilde{M})$, then 
\[ \tilde{F}(g_{\gamma'} (\tilde{x})) = \tilde{F} (g_{\gamma'} g_\gamma(\tilde{x_0})) =\alpha(\gamma')\alpha(\gamma) \tilde{F}(x_0) = \alpha(\gamma')\tilde{F} (g_\gamma(\tilde{x_0})) = \alpha(\gamma')\tilde{F} (\tilde{x}) . \]
Hence, $\tilde{F}$ is $G$-equivariant.

Note here that if we take some homeomorphism from $M$ to $X$, then this lifts to a map $\tilde{F}'$ from $\tilde{M}$ to $\tilde{X}$ and by point-pushing $G$-equivariantly, we can assume that this map agrees with $\tilde{F}$ on the vertices. Because of this we can define a straight line homotopy on each simplex of $\tilde{M}$ which commutes with the $G$-action and takes $\tilde{F}$ to $\tilde{F}'$. Thus $F$ and $F'$ are homotopic, and as $F'$ is a homeomorphism, $F$ must be a homotopy equivalence.


In the cusped case, we can use the same argument if we can define $\tilde{F}$ on the ideal vertices in $\tilde{M}$  and show it to be $G$-equivariant on them too. Note here that  we are in fact defining $\tilde{F}$ by defining a map from $\tilde{M} \cup \{ \text{ideal vertices} \}$ to $\h^n \cup \partial \h^n$ and then taking its restriction to $\tilde{M}$. However, we will abuse notation to refer to both maps as $\tilde{F}$ throughout.  We number the cusps and take a spanning tree of the $1$-skeleton of the link of the ideal vertex $v_i$ corresponding to the $i$th cusp and add some edge which ends at $v_i$. We also choose a path $\delta$ in the 1-skeleton of $M$ from our basepoint to this tree such that the union is still a tree, call this union $\Gamma_i$. 
Any edge in the link of the ideal vertex which is outside of the spanning tree gives a loop $\sigma$ by adjoining the unique geodesic in $\Gamma_i$ linking its endpoints. We can make $\sigma$ be based at $x_0$ by adjoining our path $\delta$ at both ends. As $\sigma$ is homotopic into the ideal vertex, $\alpha(\sigma)$ is thus a parabolic element of $G$, fixing some point $p$ in $\partial \h^n$. 

If $\tilde{v_i}$ is the lift of $v_i$ which lies in the same lift of $\Gamma_i$ as $\tilde{x_0}$, then we define $\tilde{F} (\tilde{v_i}) = p$. Each lift $\tilde{v_i} $ of $v_i$ lies in a lift of $\Gamma_i$ so if we take some path $\gamma$ from $\tilde{x_0}$ to this new lift of $\Gamma_i$, then we define $\tilde{F}(\tilde{v_i}) =  \alpha(\gamma) p$.
From here, we only need to check two things, first that $\tilde{F}$ is well defined, and second that it is $G$-equivariant. 
For the well definedness, note that for a lift $\tilde{v_i}$, there are many lifts of $\Gamma_i$ to which it might belong and thus distinct choices, $\gamma, \gamma'$ for the path defined above. However all such pairs of lifts of 	$\Gamma_i$ can be connected by a path which lies entirely in the link of the lift and maps down to some loop $\delta$ which is homotopic into $v_i$. Thus $\alpha(\delta)$ corresponds to a parabolic element fixing $p$ and so as $\gamma' = \gamma \delta$, we get that 
\[   \alpha(\gamma') p = \alpha(\gamma)\alpha(\delta)p = \alpha(\gamma)p  \]
so $\tilde{F}$ is well defined.
For the $G$-equivariance, note that for $g$ some deck transformation $\tilde{x_0}$ and $g(\tilde{x_0})$ are linked by a path $\gamma$ with $\alpha(\gamma) = g$ so as $g \tilde{v_i}$ lies in the same lift of $\Gamma$ as $g\tilde{x_0}$ we have that 
\[  \tilde{F}(g \tilde{v_i}) =  \alpha(\gamma) p= \alpha(\gamma) \tilde{F} (\tilde{v_i}) = g \tilde{F} (\tilde{v_i})\]
and hence for any $h \in G$ and any ideal vertex $\tilde{v} = g\tilde{v_i} \in \tilde{M}$, the following holds
\[ \tilde{F}(h \tilde{v} ) = \tilde{F} (hg \tilde{v_i} )=  hg \tilde{F} (\tilde{v_i} ) = h \tilde{F} ( \tilde{v})\]
\end{proof}

\begin{lemma}\label{HomEqSurj}
A homotopy equivalence $f:M \rightarrow N$ between closed orientable $n$-manifolds is a surjection.
\end{lemma}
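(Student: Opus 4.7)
The plan is a standard argument by contradiction using top-dimensional integral homology. Since $M$ and $N$ are closed, orientable $n$-manifolds, each has $H_n(\cdot;\Z) \cong \Z$, and since $f$ is a homotopy equivalence the induced map $f_*:H_n(M;\Z) \to H_n(N;\Z)$ is an isomorphism of $\Z$. I shall show that non-surjectivity of $f$ forces $f_*$ to be the zero map, yielding the desired contradiction.

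Suppose for contradiction that $f(M) \neq N$. Since $M$ is compact, $f(M)$ is a compact, hence closed, subset of $N$, so $N \setminus f(M)$ is a non-empty open subset. Choosing any $p \in N \setminus f(M)$, I would factor $f$ as the composition $M \xrightarrow{g} N \setminus \{p\} \xrightarrow{\iota} N$, where $\iota$ denotes the inclusion.

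The final ingredient is the classical fact that a connected, non-compact topological $n$-manifold has vanishing $n$th integral homology (see for instance Hatcher's \emph{Algebraic Topology}); in particular $H_n(N \setminus \{p\};\Z) = 0$. Functoriality of homology then gives $f_* = \iota_* \circ g_* = 0$, contradicting the fact that $f_*$ must be an isomorphism of $\Z$.

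No real obstacle arises here; the argument is purely topological and needs nothing beyond closedness, orientability, and the homotopy equivalence hypothesis. The one point worth flagging is the vanishing of the top homology of $N \setminus \{p\}$, which is a standard consequence of the local-but-not-global existence of a fundamental class on a non-compact manifold, and so requires only a citation rather than elaboration in the write-up.
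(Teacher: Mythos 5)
Your proof is correct and follows essentially the same route as the paper: both arguments pick a missed point $p$, note that the image of the fundamental class then lives in $N\setminus\{p\}$, and conclude it must vanish, contradicting that $f_*$ is an isomorphism on $H_n$. The only cosmetic difference is that you invoke $H_n(N\setminus\{p\};\Z)=0$ for a connected non-compact $n$-manifold, whereas the paper uses the natural isomorphism $H_n(N)\cong H_n(N,N\setminus\{p\})$; these are two standard formulations of the same local-homology fact.
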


\begin{proof}
Note that as $f$ is a homotopy equivalence, it induces an isomorphism on $n$th homology groups, sending a fundamental class $[M]$ to a fundamental class $[N]$.\\
Now suppose $f$ is not surjective, and let $y \in N$ be outside the image of $f$. Then consider the natural isomorphism $H_n(N) \cong H_n(N, N-y)$ then for any representative of $[M]$, $f[M]$ lies entirely in $N-y$ and so $f[M]$ is trivial in $H_n(N, N-y)$	 and thus trivial in $H_n(N)$ contradicting that $f$ is a homotopy equivalence.
\end{proof}

\begin{remark}
It is also possible (see Lemma \ref{CuspedSurjection}) to prove that the homotopy equivalence defined in Lemma \ref{HomotopyEquiv} is surjective in the non-compact case, but that requires a more nuanced argument as homotopy equivalences between non-compact manifolds can easily not be surjections, for example consider a standard map from $\R^n$ to itself which maps everything to the point.
\end{remark}

We will soon see how to find cocycles as the solutions of a system of polynomials, but these cocycles could be trivial or induce non faithful representations, so to use the results of Section \ref{SecPolynomial} we need some connected component of the solution set to consist of cocycles which all induce faithful lattice representations (see below definition) which define homotopy equivalences by the method of Lemma \ref{HomotopyEquiv}. We shall prove the existence of this connected component using Calabi-Weil Rigidity.


\begin{defn}
Let $G$ be a group, $\Lambda =  Isom(\h^n)$ and let $H : = \{ H_1, \ldots H_q \}$ a collection of subgroups of $G$. Define
\[ Hom_{par}(G, H ; \Lambda) := \{ \rho :G \rightarrow \Lambda \mid  \rho (\gamma) \text{ is parabolic for all } \gamma \in H_j, j=1, \ldots q\} \] 
If $G$ is a subgroup of $\Lambda$, then we define
\[ Hom_{par}(G ; \Lambda) :=  Hom_{par}(G, H_{\text{max}} ; \Lambda) \]
where $H_{\text{max}}$ is a collection of representatives of the conjugacy classes of maximal parabolic subgroups of $G$. Note that if $G$ is a uniform lattice, then $ Hom_{par}(G ; \Lambda) =  Hom(G ; \Lambda)$.

\end{defn}
\begin{theorem}[Calabi-Weil Rigidity, Thms. 4.19 and 8.55 in \cite{Kapovich00} ]

Let $G$ be a  lattice in $Isom(\h^n)$ where $n\geq 3$. The identity representation $\rho_0: G \rightarrow Isom(\h^n):=\Lambda$ lies in a connected component of $ Hom_{par}(G; \Lambda)$ made up entirely of conjugates of $\rho_0$.
\end{theorem}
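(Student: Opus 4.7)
The plan is to prove \emph{local rigidity} of $\rho_0$ in $Hom_{par}(G;\Lambda)$ and then upgrade this to the connected component statement. Local rigidity means that some neighborhood $U$ of $\rho_0$ consists entirely of $\Lambda$-conjugates of $\rho_0$. Granted this, the conjugation orbit $\Lambda\cdot\rho_0$ is both open in $Hom_{par}(G;\Lambda)$, since local rigidity applies at each of its points by the same argument, and closed, since it is a homogeneous space for the smooth conjugation action of $\Lambda$ whose stabiliser at $\rho_0$ is the centraliser of the Zariski-dense image $\rho_0(G)$, which is trivial modulo the centre of $Isom(\h^n)$. An open and closed subset of the connected component of $\rho_0$ fills the component.

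For the local rigidity step I would invoke Weil's infinitesimal rigidity machinery. The Zariski tangent space to $Hom(G,\Lambda)$ at $\rho_0$ is naturally isomorphic to the space $Z^1(G,\mathfrak{g})$ of $1$-cocycles on $G$ with coefficients in the adjoint representation $\mathfrak{g}=\mathfrak{so}(n,1)$, and tangent vectors to the conjugation orbit are exactly the coboundaries $B^1(G,\mathfrak{g})$. If $H^1(G,\mathfrak{g})=0$, then the orbit map $\Lambda \to Hom(G,\Lambda)$ is a submersion at the identity onto a smooth neighborhood of $\rho_0$, and an implicit function argument yields local rigidity. In the non-uniform case, the relevant object is the parabolic subspace $Z^1_{par}$ of cocycles whose restriction to each cusp subgroup $H_j$ is a coboundary (reflecting that parabolic elements must remain parabolic under the deformation), and one targets the vanishing of the parabolic cohomology $H^1_{par}(G,\mathfrak{g})$.

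The cohomology vanishing is the core input. In the cocompact case Matsushima--Murakami computes $H^1(G,\mathfrak{g})$ by harmonic $\mathfrak{g}$-valued $1$-forms on the closed manifold $G\backslash\h^n$, and a Weitzenb\"ock/Bochner computation using the constant negative sectional curvature of $\h^n$ together with the $SO(n)$-module decomposition of $\mathfrak{g}$ forces such forms to vanish whenever $n\geq 3$. In the cusped case one performs the analogous Hodge theory on a thick-part truncation of $G\backslash\h^n$ with boundary conditions on each horospherical cusp cross-section encoding parabolicity, and again obtains vanishing of the relevant $L^2$ harmonic forms for $n\geq 3$.

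The main obstacle is the parabolic cohomology vanishing in the non-uniform case. The closed case is a clean Bochner argument, but at cusps one must set up the correct function spaces so that Hodge theory still applies, confirm that horospherical ends contribute only to coboundaries, and exploit the virtually abelian structure of the maximal parabolic subgroups of rank $n-1$. These technical issues are precisely what Theorems 4.19 and 8.55 in Kapovich, cited in the statement, are designed to handle, and one takes them as a black box for the present paper.
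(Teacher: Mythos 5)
The paper does not actually prove this statement: it is quoted as an external result, with the proof deferred entirely to Theorems 4.19 and 8.55 of Kapovich's book. Your proposal sketches the standard argument that underlies that citation --- Weil's infinitesimal rigidity reducing local rigidity to the vanishing of $H^1(G,\mathfrak{so}(n,1))$ (parabolic cohomology $H^1_{par}$ in the cusped case), established by a Matsushima--Murakami/Bochner computation --- and you too take the hard cusped-case vanishing as a black box from the same source. So in substance your route coincides with what the paper does: the real content is outsourced to Kapovich, and your sketch is a faithful outline of what that reference contains.

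One step of your sketch would not survive scrutiny if it were offered as a proof: the claim that the orbit $\Lambda\cdot\rho_0$ is closed \emph{because} it is a homogeneous space for a smooth action with stabiliser trivial modulo the centre. Smoothness and trivial stabiliser do not force an orbit to be closed; conjugation orbits are the standard counterexample (the conjugacy class of a parabolic in $PSL(2,\R)$ accumulates at the identity). The usual repair is different: if $g_i\rho_0 g_i^{-1}\to\rho$ inside $Hom_{par}(G;\Lambda)$, then $\rho$ is again discrete and faithful by the Chuckrow--J{\o}rgensen--Wielenberg theorem on limits of discrete faithful representations of non-elementary groups, the parabolicity constraints guarantee that $\rho$ is again a lattice representation, and Mostow--Prasad rigidity then exhibits $\rho$ as a conjugate of $\rho_0$; alternatively one argues that the conjugating elements $g_i$ remain in a compact subset modulo the finite centraliser because the limit is non-elementary, so they subconverge and $\rho$ lies on the orbit. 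With that closedness argument supplied, your open-and-closed strategy does yield the full component statement.
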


For the sake of comparison we place here the similar result of \cite{GarlandRaghunathan70} which we shall use for the cusped case in section \ref{SecCusp}. It crucially doesn't cover $3$-dimensional manifolds so we can't use it in the closed case, but it avoids the requirement on parabolics in the cusped case.

\begin{reptheorem}{GarlandRaghunathan}[Theorem 7.2 in \cite{GarlandRaghunathan70}]
Let $G$ be a  lattice in $Isom(\h^n)$ for $n \geq 4$. The identity representation $\rho_0: G \rightarrow Isom(\h^n):=\Lambda$ lies in a connected component of $ Hom(G; \Lambda)$ made up entirely of conjugates of $\rho_0$.
\end{reptheorem}

\begin{defn}[Lattice Representation]
We call a representation $G \rightarrow Isom(\h^n)$ a lattice representation if its image is a lattice (a discrete co-finite volume subgroup).
\end{defn}

To round out our rigidity results, we recall Mostow-Prasad rigidity, which tells us (among other things) that all faithful lattice representations of a group induce isometric hyperbolic structures, a fact that we shall be implicitly using throughout the rest of the paper.

\begin{theorem}[Mostow-Prasad Rigidity \cite{Prasad73}]
Let $\Gamma, \Gamma'$ be two lattices in $Isom(\h^n)$ and $\phi : \Gamma \rightarrow \Gamma'$ an isomorphism .
Then there exists an isometry $g$ in $Isom(\h^n)$ such that 
\[ g\gamma g^{-1} = \phi(\gamma) \quad \forall \gamma \in \Gamma \]

\end{theorem}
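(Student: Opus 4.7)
The plan is to follow Mostow's classical quasi-conformal proof, with Prasad's adaptations handling the finite-volume case; note the statement really requires $n \geq 3$, since in dimension two Teichm\"uller space is nontrivial and rigidity fails.

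First I would use $\phi$ to produce a $\phi$-equivariant quasi-isometry $\tilde f : \h^n \to \h^n$. In the cocompact case this is immediate from the \v{S}varc-Milnor lemma: the orbit maps identify both $\Gamma$ and $\Gamma'$ with $\h^n$ quasi-isometrically, and $\phi$, viewed as a bijection of finitely generated groups with their word metrics, is a quasi-isometry between Cayley graphs; transferring through the orbit maps yields $\tilde f$ after choosing basepoints. In the finite-volume case one first truncates the quotient manifolds by removing Margulis cusp neighbourhoods, applies \v{S}varc-Milnor to the thick part, and then extends $\tilde f$ equivariantly across cusps by matching conjugate maximal parabolic subgroups (establishing that $\phi$ sends parabolics to parabolics is delicate and constitutes part of Prasad's contribution).

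Second, by the Morse Lemma for the $\mathrm{CAT}(-1)$ space $\h^n$, $\tilde f$ extends continuously to a homeomorphism $\partial\tilde f : S^{n-1}_\infty \to S^{n-1}_\infty$, and a cross-ratio estimate shows it is quasi-conformal; the equivariance passes to the boundary as $\partial\tilde f \circ \gamma|_\infty = \phi(\gamma)|_\infty \circ \partial\tilde f$ for all $\gamma \in \Gamma$. The heart of the proof is to upgrade quasi-conformal to conformal. A quasi-conformal homeomorphism of $S^{n-1}$ is differentiable almost everywhere, and the measurable distortion function is $\Gamma$-invariant by the boundary equivariance. Mostow's ergodicity theorem states that $\Gamma$ acts ergodically on pairs of distinct boundary points (equivalently, the geodesic flow on the unit tangent bundle of $\Gamma\backslash\h^n$ is ergodic), which forces the distortion to be essentially constant; a standard differentiation/scaling argument pins the constant down to $1$, so $\partial\tilde f$ is conformal almost everywhere. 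By Liouville's rigidity theorem---valid precisely because $n - 1 \geq 2$---$\partial\tilde f$ must be a M\"obius transformation.

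Finally, the Poincar\'e extension sends the M\"obius boundary map $\partial\tilde f$ to an isometry $g \in Isom(\h^n)$, and the boundary equivariance then reads $g\gamma g^{-1} = \phi(\gamma)$ for every $\gamma \in \Gamma$, which is exactly the conclusion. The main obstacle is the conformality step: assembling Mostow's ergodicity with Liouville's rigidity is the real content of the theorem and the reason it fails in dimension two (where Liouville's rigidity is false). In the finite-volume case the secondary technical burden is the initial construction of the equivariant quasi-isometry across cusps, which is where Prasad's extension does its work.
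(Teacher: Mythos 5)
First, a point of reference: the paper does not prove this theorem. It is stated as a quoted result of Prasad (strong rigidity of $\Q$-rank $1$ lattices), so there is no in-paper argument to compare yours against; your proposal has to be judged against the classical literature. Measured that way, your outline is the standard Mostow--Prasad quasi-conformal proof and most of its architecture is right: equivariant quasi-isometry via \v{S}varc--Milnor, boundary extension, quasi-conformality, ergodicity, a Liouville-type theorem, Poincar\'e extension; you are also correct that $n \geq 3$ must be added to the statement as quoted, and that in the finite-volume case one works with the thick part and must show $\phi$ respects the peripheral (parabolic) structure, which is indeed where Prasad's contribution lies. One technical remark there: \v{S}varc--Milnor gives a quasi-isometry of the truncated, horoball-deleted spaces with their path metrics, not of $\h^n$ itself; one still has to show this induces a boundary map of $S^{n-1}_\infty$ (e.g.\ by checking horoballs are coarsely matched with horoballs), and that deserves more than the single clause you give it.

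The one step that, as written, would fail is the passage from constant distortion to conformality. Ergodicity of $\Gamma$ on the boundary does make the a.e.\ dilatation constant, say $K$, but no ``differentiation/scaling argument'' can conclude $K=1$: there exist quasi-conformal homeomorphisms of the sphere with constant dilatation $K>1$ (an affine stretch of $\R^{n-1}$ extended to $S^{n-1}$, for instance), so constancy alone yields no contradiction. The actual argument is that if $K>1$, the a.e.-defined directions of maximal stretch give a measurable $\Gamma$-equivariant field of proper subspaces of the tangent spaces of $S^{n-1}_\infty$, and such a field is incompatible with ergodicity of the $\Gamma$-action on pairs of distinct boundary points (equivalently, ergodicity of the geodesic flow, or in some treatments the frame flow); this is the heart of Mostow's proof and is exactly the ingredient your sketch elides. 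A smaller imprecision: for $n=3$ the boundary has dimension $2$, where the classical Liouville theorem does not apply; one instead uses that a $1$-quasiconformal homeomorphism of $S^2$ is M\"obius (Weyl's lemma / the measurable Riemann mapping theorem). With those repairs, your outline matches the proof in the cited sources.
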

In the following lemma we use the isomorphism $SO(n,1) \cong Isom^+(\h^n)$ coming from the hyperboloid model of hyperbolic $n$-space.
\begin{lemma} \label{cocyclepolys1}
Let $M$ be a closed hyperbolic $n$-manifold triangulated by $t$ $n$-simplices. There is a system of polynomial inequalities as in Theorem \ref{Grigoriev} such that the following all hold:

\begin{itemize}
\item The solution set consists of cocycles in $C^1(M, SO(n,1))$.
\item The solution set has a component consisting entirely of cocycles each of which induces a  faithful lattice representation.
\item The system consists of $\leq (n+1)^5t$ polynomials, in $\leq (n+1)^4 t $ variables with degree $\leq 2$ and coefficients all $\pm 1$.
\end{itemize} 
\end{lemma}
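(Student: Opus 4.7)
The plan is to encode a cocycle $\alpha \in C^1(M, SO(n,1))$ by recording, for each oriented edge of the triangulation, the $(n+1)^2$ entries of a prospective matrix $\alpha(e)$ as variables, and to express the group-membership and $2$-simplex cocycle relations as polynomial equations of degree at most $2$ with $\pm 1$ coefficients. Choosing one orientation per edge, I get at most $\binom{n+1}{2}t \le (n+1)^2 t / 2$ edges and hence at most $(n+1)^4 t$ variables. For each edge I impose the orthogonality relation $\alpha(e)^T J \alpha(e) = J$, where $J = \mathrm{diag}(1,\ldots,1,-1)$; this gives $\binom{n+2}{2}$ scalar equations per edge of degree $2$ with entries in $\{-1,0,1\}$, and cuts out the larger group $O(n,1)$. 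For each $2$-simplex with edges $a: u \to v$, $b: v \to w$, $c: u \to w$, I impose the cocycle equation $\alpha(a) \alpha(b) = \alpha(c)$; the key point is that the orthogonality relation forces $\alpha(e)^{-1} = J\alpha(e)^T J$, which is \emph{linear} in the stored variables, so even if the stored orientation of an edge is reversed, the substitution keeps the cocycle equation at degree $2$ with $\pm 1$ coefficients. As there are at most $\binom{n+1}{3} t$ two-simplices, the total number of equations is bounded by $\binom{n+2}{2}\binom{n+1}{2} t + (n+1)^2 \binom{n+1}{3} t \leq (n+1)^5 t$.

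For the component producing faithful lattice representations, use the given hyperbolic structure on $M$ to fix a discrete faithful representation $\rho_0 : \pi_1(M) \to SO^+(n,1)$. Pick a basepoint $x_0$ and a simplicial path $\delta_v$ from $x_0$ to each vertex $v$, with $\delta_{x_0}$ trivial. Define $\alpha_0(e) := \rho_0\bigl([\delta_u \cdot e \cdot \delta_v^{-1}]\bigr)$ for $e : u \to v$. The cocycle relation on a $2$-simplex with edges $a, b, c$ as above holds since $\delta_u \cdot a \cdot b \cdot \delta_w^{-1}$ is homotopic, across the face, to $\delta_u \cdot c \cdot \delta_w^{-1}$; moreover $\alpha_{0*} = \rho_0$ is a faithful lattice representation. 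Let $C$ be the connected component of the real solution set containing $\alpha_0$. The assignment $\alpha \mapsto \alpha_*$ is polynomial in the variables (its value on any fixed loop is a fixed word in the matrix entries), hence continuous into $\mathrm{Hom}(\pi_1(M), O(n,1))$. By Calabi--Weil rigidity, the component of this representation variety containing $\rho_0$ consists entirely of conjugates of $\rho_0$, each a faithful lattice representation into $SO^+(n,1)$. Thus every $\alpha \in C$ takes values in $SO^+(n,1) \subset SO(n,1)$ and induces a faithful lattice representation, as required.

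The main obstacle is keeping every polynomial at degree at most $2$: naive attempts to enforce $\det \alpha(e) = 1$ would require polynomials of degree $n+1$, and directly stating $\alpha(\bar e) = \alpha(e)^{-1}$ would similarly break the degree bound. The identity $\alpha(e)^{-1} = J\alpha(e)^T J$ on $O(n,1)$ sidesteps both issues: the $2$-simplex equation stays quadratic in the variables regardless of local orientation choices, and the restriction from $O(n,1)$ down to the identity component $SO^+(n,1)$ is enforced automatically by connectedness, since $C$ is connected and contains the $SO^+(n,1)$-valued cocycle $\alpha_0$.
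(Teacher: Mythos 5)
Your proposal is correct and follows the same overall strategy as the paper: matrix entries of the edge labels as variables, quadratic face relations with $\pm 1$ coefficients, a count giving the stated bounds, and Calabi--Weil rigidity to show the component of the solution set containing a cocycle induced by the discrete faithful representation consists entirely of cocycles inducing faithful lattice representations. The differences are in implementation. The paper stores two matrices per edge (one per orientation) and adds quadratic polynomials $\alpha(e)\alpha(\bar e)=I$ forcing them to be inverse, whereas you store one matrix per edge and rewrite reversed orientations via $\alpha(e)^{-1}=J\alpha(e)^{T}J$; both keep everything quadratic. More substantively, you explicitly impose the membership equations $\alpha(e)^{T}J\alpha(e)=J$ and then use connectedness of the distinguished component (which contains an $SO^+(n,1)$-valued cocycle, and $SO^+(n,1)$ is clopen in $O(n,1)$) to conclude all cocycles in that component are $SO^+(n,1)$-valued; the paper leaves the group-membership constraints implicit, which is exactly the point where a naive $\det=1$ equation would break the degree-$2$ bound. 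Your route handles this cleanly; the only cosmetic discrepancy is that your full solution set consists of $O(n,1)$-cocycles rather than $SO(n,1)$-cocycles, but the distinguished component lies in $SO^+(n,1)$, which is all that is used in the sequel (Lemma \ref{cocyclepolys2} and Theorem \ref{EdgeBound}).
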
 

\begin{proof}
For each edge $e$ of the triangulation of $M$ (of which there are less than ${n+1 \choose 2} t$), define two sets of $(n+1)^2$ variables corresponding to the entries of a matrix in $SO(n,1)$, these variables correspond to the images of both orientations of $e$ under some cocycle $\alpha$. Then, for each dimension 2 face of the triangulation (of which there are less than ${n +1 \choose 3}t$) we get a relation which can be expressed as $(n+1)^2$ polynomials (for the $(n+1)^2$ variables defining the matrix) which are satisfied iff the matrices corresponding to the first two edges of the face multiply to give the matrix corresponding to the third edge, these being the face relations of the cocycle.
We also include polynomials which ensure that opposite orientations of an edge are sent to inverse elements, this consists of $(n+1)^2$ polynomials for each edge, and these polynomials have coefficients $\pm 1$ and are quadratic. 

Thus we have a system of $\leq (n+1)^5t$ polynomials in $\leq (n+1)^5t$ variables with coefficients always 1 or -1 and which are at most quadratic. Furthermore, solutions to this system correspond to maps from the oriented $1$-simplices of $M$ to $SO(n,1)$ which satisfy the face relations and orientation reversing relations, thus by definition solutions correspond to $1$-cocycles.

As our manifold $M$ admits a hyperbolic structure, we know that it admits a faithful lattice representation and thus there exists a cocycle which induces such a representation. Now any continuous deformation of this cocycle induces a continuous deformation of the induced representation. Thus Calabi-Weil Rigidity implies that any component containing a cocycle which induces a  faithful lattice representation consists entirely of such cocycles.
\end{proof}

At this point we could apply the results of Section \ref{SecPolynomial} and get bounds on the size of the matrices in our cocycle and use that to find bounds on the geometry of $M$. Instead, we're going to expand the system of polynomials so that the geometric quantities we want to bound appear among the variables and thus the results of Section \ref{SecPolynomial} immediately give us the desired bounds. 

For the following Lemma it is useful to note that by finiteness of the triangulation, any cocycle can be altered (by multiplying wherever necessary the value on all edges round a vertex by a sufficiently small isometry) so as to induce the same representation, while taking non-zero values on every edge. Thus we don't lose any representations by making this requirement of our cocycles.

\begin{lemma} \label{cocyclepolys2}
We can expand the system of polynomials above to include variables corresponding to the lengths of the edges in the image of the homotopy equivalence $F$ defined in Lemma \ref{HomotopyEquiv}. We also add the requirement that each edge be non-zero length.

The solutions of this new system are in one to one correspondence with the subset of the old solution set consisting of all cocycles which take non-zero values on all edges.
\end{lemma}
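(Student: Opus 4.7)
The plan is to adjoin to the system of Lemma \ref{cocyclepolys1} one fresh variable $L_e$ per (unoriented) edge $e$ of the triangulation, together with a single linear polynomial equation relating $L_e$ to the matrix entries already attached to $e$, plus the strict polynomial inequality $L_e > 1$ to enforce non-zero length. Working in the hyperboloid model of $\h^n \subset \R^{n+1}$ with base point $y_0 = (0, \ldots, 0, 1)$, the Lorentzian inner product identity $\cosh(d(y_0, A y_0)) = -\langle y_0, A y_0\rangle_L$ for $A \in SO(n,1)$ simplifies in coordinates to a single entry of $A$; concretely, $\cosh(d(y_0, A y_0))$ equals the $(n+1,n+1)$-entry of $A$.

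By the construction of $\tilde{F}$ in Lemma \ref{HomotopyEquiv}, the image of any lift of an edge $e$ is the geodesic segment from $\alpha(\gamma) y_0$ to $\alpha(\gamma) \alpha(e) y_0$ for some path $\gamma$. Since $\alpha(\gamma)$ is an isometry, the length of $F(e)$ depends only on $\alpha(e)$ and equals $d(y_0, \alpha(e) y_0)$. I therefore add, for each edge $e$, the linear equation $L_e = A_e^{(n+1,n+1)}$, where $A_e$ denotes the matrix of variables already attached to $e$; thus $L_e$ encodes $\cosh(\ell_e)$ where $\ell_e$ is the hyperbolic length of $F(e)$, and requiring $L_e > 1$ is equivalent to demanding $\ell_e > 0$.

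The resulting system has at most $\binom{n+1}{2} t$ extra variables and $\binom{n+1}{2} t$ extra linear polynomials with coefficients in $\{-1, 0, 1\}$, together with the same number of strict inequalities, so the asymptotic counts, degree bound, and coefficient bound of Lemma \ref{cocyclepolys1} are preserved. The bijection claimed is then immediate by projection: each new solution restricts to an old solution in which every image edge is non-degenerate, and conversely each such old solution uniquely determines every $L_e$ via the linear equation above, giving a unique lift back.

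The only small subtlety is that the same $L_e$ must serve both orientations of $e$, which follows from $d(y_0, A y_0) = d(y_0, A^{-1} y_0)$ for any isometry $A$ (so both orientations produce the same value of the relevant matrix entry's $\cosh$-interpretation); and the fact that discarding cocycles which take the trivial value on some edge loses no representations is exactly the content of the remark immediately preceding the lemma. I do not anticipate a main obstacle here: the heart of the argument is the elementary observation that $\cosh(d(y_0, Ay_0))$ is a polynomial (in fact linear) function of the entries of $A$, and everything else is bookkeeping designed to stay inside the polynomial-inequality framework of Section \ref{SecPolynomial}.
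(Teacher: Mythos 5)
Your proof is correct, but it takes a genuinely different and in fact leaner route than the paper. The paper does not compute edge lengths directly from $\alpha(e)$: it introduces new variables for the coordinates of the images of (lifts of) the vertices, each defined by expressing $A_\gamma(0,\ldots,0,1)^T$ as a product of up to $\binom{n+1}{2}t$ edge matrices along a path from the basepoint, and then encodes $\cosh(l(e))-1$ via the Lorentzian form in those vertex variables; this costs roughly $(n+2)\binom{n+1}{2}t$ extra variables and polynomials of degree up to $\binom{n+1}{2}t$, which is exactly where the bound $d\leq\binom{n+1}{2}t$ in the combined system comes from. You instead exploit $G$-equivariance of $\tilde F$: the image of any lift of $e$ runs from $\alpha(\gamma)y_0$ to $\alpha(\gamma)\alpha(e)y_0$, so its length is $d(y_0,\alpha(e)y_0)=\mathrm{arcosh}\bigl(A_e^{(n+1,n+1)}\bigr)$, a linear function of one existing matrix entry (and indeed the same for both orientations, since $A^{-1}=JA^TJ$ preserves that entry). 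This yields only $\binom{n+1}{2}t$ new variables, keeps the degree bound at $2$ and the coefficients at $\pm1$, and the correspondence with cocycles whose edges have nondegenerate image ($L_e>1\Leftrightarrow\ell_e>0$) is immediate, matching the paper's intended meaning of ``non-zero values.'' Nothing downstream is lost, since Theorem \ref{EdgeBound} only uses the edge-length variables, not vertex coordinates (the cusped $3$-dimensional case later does want fixed-point and endpoint data, but that is a separate lemma). The headline bound $R\geq 2^{-(nt)^{O(n^4t)}}$ is unchanged after big-$O$ simplification, since the exponent $O(N)$ already dominates, but your version makes the parameter $d$ of Section \ref{SecPolynomial} a constant rather than of order $n^2t$, so the bookkeeping is strictly simpler.
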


\begin{proof}
In the proof of Lemma \ref{HomotopyEquiv} we didn't specify a model for the universal cover of either manifold, though both have universal cover $\h^n$. Here we shall take the hyperboloid model as our model and we shall take $(0,\ldots,0,1)$ as our basepoint. Now $SO(n,1)$ acts on the hyperboloid by matrix multiplication.
Thus the map on universal covers $\tilde{F} : (\tilde{M}, \tilde{x_0}) \rightarrow (\h^n, (0, \ldots ,0,1))$ is defined by sending the point at the end of the path $\gamma$ in $\tilde{M}$ to the point $A_\gamma  (0,\ldots, 0,1)^T $ where $\alpha(\gamma) =: A_\gamma \in SO(n,1)$.
Thus the vertices of the triangulation of the universal cover are given by $A (0,\ldots, 0,1)^T$ where $A$ is $A_\gamma$ for some $\gamma$.

As there are at most ${n+1 \choose 2}t$ edges in $M$ we can find a lift $\tilde{e}$ of each edge $e$ of $M$ such that both vertices $\underline{x}, \underline{y}$ of $\tilde{e}$ lie within ${n+1 \choose 2}t$ edges of the basepoint. Picking paths $\gamma_1, \gamma_2$ of length $\leq {n+1 \choose 2}t$ to each vertex of $\tilde{e}$, yields group elements $A_{\gamma_1}, A_{\gamma_2}$ which can be expressed as $(n+1)^2$ polynomials of degree at most ${n+1 \choose 2}t$ in the original variables (each $A_\gamma$ is a product of $A_{e_i}$ with $e_i$ edges of the triangulation). 

We build this up edge by edge, at each point we define at most one new vertex for each new edge as a set of $n+1$ new variables. In total we need $\leq (n+1){n+1 \choose2} t$ new variables and polynomials defining these vertices. The length of the edge between two such vertices $\underline{x}$ and $\underline{y}$ where  $\underline{x} = (x_0,\ldots, x_{n})$ and $\underline{y} = (y_0,\ldots, y_{n})$can then be calculated as 
\[l(e) = d(\underline{x},\underline{y}) = arcosh ( x_0y_0 + \ldots + x_{n-1}y_{n-1} - x_ny_n).\] This is not a polynomial and so we instead define a variable for each edge as 
\[C = cosh(l(e)) -1= cosh(d(\underline{x}, \underline{y})) -1\]
which is certainly a polynomial, the number of variables $C$ is the number of edges which is $\leq{n+1 \choose2} t$. We then require that $C > 0$, and thus that $l(e)>0$.

In total we have added less than $(n+2){n+1 \choose2} t$ new variables and $(n+2){n+1 \choose2} t$ polynomials with degree bounded by ${n+1 \choose 2}t$ and coefficient complexity still at 1.
\end{proof}

Combining the above two proofs gives a polynomial system which has size described (in the language of Section \ref{SecPolynomial}) by 
\begin{itemize}
\item $ N \leq (n+1)^4t +(n+2){n+1 \choose2} t  \leq (n+2)^4t$
\item $\kappa \leq (n+1)^5t + (n+2){n+1 \choose2} t  \leq (n+2)^5t$
\item $d \leq {n+1 \choose 2}t \leq (n+1)^2t$
\item $M \leq 2$
\end{itemize}

Thus, as the results of Section \ref{SecPolynomial} bound the size of non-zero variables the following theorem is a simple corollary of Corollary \ref{variablebound} and Lemmas \ref{HomotopyEquiv}, \ref{cocyclepolys1}, and \ref{cocyclepolys2}. 

\begin{theorem}\label{EdgeBound}
Let $M$ be a closed hyperbolic manifold, there exists a homotopy equivalence $F$ as described in Lemma \ref{HomotopyEquiv} such that the image of each edge has length $l(e)$ bounded in the following way:
\[ cosh(l(e)) \leq 2^{2 (n+2)^5t)^{O((n+2)^4t)}}  \]
This can be simplified, due to big O notation to the following bound:
\[  cosh(l(e)) \leq 2^{(nt)^{O(n^4t)}} \]
And hence, that 
\[ 0 < l(e) \leq   (nt)^{O(n^4t)} \]
\end{theorem}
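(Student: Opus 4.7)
The proof will essentially be bookkeeping, combining the polynomial system of Lemmas \ref{cocyclepolys1} and \ref{cocyclepolys2} with the variable bound from Corollary \ref{variablebound}. First I would assemble the combined system and record its size parameters, which are already listed just before the statement: $N \leq (n+2)^4 t$, $\kappa \leq (n+2)^5 t$, $d \leq (n+1)^2 t$, and $M \leq 2$. Crucially, the expansion of Lemma \ref{cocyclepolys2} introduces, for each edge $e$ of the triangulation, a variable $C_e = \cosh(l(e)) - 1$ subject to the strict inequality $C_e > 0$, so the (hyperbolic cosines of the) edge lengths are literally among the coordinates to which Theorem \ref{Grigoriev} will apply.

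Next I would pick out the correct component of the solution set. Lemma \ref{cocyclepolys1}, via Calabi--Weil rigidity, guarantees a connected component whose cocycles all induce faithful lattice representations of $\pi_1(M)$; by the remark preceding Lemma \ref{cocyclepolys2}, I may further restrict to cocycles taking non-zero values on every edge without altering the induced representation, so this component meets the expanded solution set non-trivially. Lemma \ref{HomotopyEquiv} then turns any such cocycle into the desired homotopy equivalence $F$. Applying Corollary \ref{variablebound} to the variable $C_e$ (which is non-zero since the system forces $C_e > 0$) yields
\[ 0 < C_e \leq 2^{M(\kappa d)^{O(N)}} = 2^{2\,((n+2)^5 t \cdot (n+1)^2 t)^{O((n+2)^4 t)}}. \]

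Finally I would collapse constants using big-$O$ notation: the base $(n+2)^5(n+1)^2 t^2$ is bounded by $(nt)^{O(1)}$, and raising it to the $O((n+2)^4 t) = O(n^4 t)$ power absorbs the $O(1)$ in the exponent, giving $\cosh(l(e)) = C_e + 1 \leq 2^{(nt)^{O(n^4 t)}}$. The bound on $l(e)$ itself then follows from the elementary inequality $\mathrm{arccosh}(x) \leq \log(2x)$ for $x \geq 1$, which absorbs a $\log 2$ term into the big-$O$ and yields $l(e) \leq (nt)^{O(n^4 t)}$; the strict positivity $l(e) > 0$ is immediate from $C_e > 0$. The only real subtlety is the simplification step, i.e.\ checking that the various polynomial factors in $n$ and $t$ genuinely vanish under big-$O$, which they do because the outer exponent $O(n^4 t)$ already dominates every polynomial prefactor. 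There is no further mathematical content: Calabi--Weil has already done the work of isolating the correct component, and Corollary \ref{variablebound} has done the work of bounding the variables.
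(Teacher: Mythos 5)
Your proposal is correct and follows essentially the same route as the paper: combine the system of Lemmas \ref{cocyclepolys1} and \ref{cocyclepolys2}, use the rigidity-guaranteed component of faithful lattice representations (together with the remark that cocycles may be taken non-zero on every edge), apply Theorem \ref{Grigoriev} via Corollary \ref{variablebound} to the edge-length variables $C_e>0$, and simplify the big-$O$ expression. Your write-up is in fact slightly more explicit than the paper's (spelling out the arccosh step and the non-empty intersection with the expanded solution set), but there is no substantive difference.
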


\begin{proof}
Lemmas \ref{cocyclepolys1} and \ref{cocyclepolys2} provide us with a system of polynomials, the solution set of which has a connected component, all elements of which induce faithful lattice representations. Mostow Rigidity tells us each of these lattices defines a hyperbolic manifold $X$ as in the setup of Lemma \ref{HomotopyEquiv}. Thus taking a bounded solution given by Theorem \ref{Grigoriev} means that all the non-zero variables of this solution are bounded as in the statement of the Theorem. In particular, as edge lengths are all non-zero the result follows.
\end{proof}

\section{Diameter of Margulis Tubes} \label{SecInjRad}

In this section we shall describe some results of hyperbolic geometry which will allow us to bound the length of a systole from below as a function of the diameter of the Margulis tube containing it. This will be used in conjuction with our bound on edge lengths from the previous section to bound the systole length of a hyperbolic manifold $M$ from below as a function of the number of tetrahedra in any topological triangulation of $M$.

The tool we will use to do this is the thick-thin decomposition, which we shall briefly recall here, following the books of Benedetti and Petronio and Jessica Purcell  \cite{Benedetti1992}\cite{Purcell20}.

\begin{remark}
The model of hyperbolic space we're using in this section is different from in the previous section. For the purpose of understanding cocycles and defining a system of polynomials, we need to be working with a variety so we used the hyperboloid model because its isometry group is a variety. In this section, we will use the Euclidean structure of horospheres and so we use the upper half space model where any given horosphere can be taken to a horizontal Euclidean plane by an isometry of $\h^n$.
\end{remark}

\begin{defn}[Injectivity Radius]
We define the injectivity radius at a point $x \in M$ to be
\[ \text{injrad}(x) = sup \{ r \mid B(x,r) \text{ is an embedded r-ball in } M  \}. \]
The injectivity radius of a manifold is then given by 
\[\text{injrad} (M) = \text{inf} \{ \text{injrad}(x) \mid x \in M \}.\]
Note that in the case where $M$ is closed, there is some $x \in M$ such that $\text{injrad}(x) = \text{injrad}(M)$.
\end{defn}

\begin{defn}
A systole of a hyperbolic manifold $M$ is a geodesic $\gamma$ in $M$ of shortest length. 
\end{defn}

\begin{defn}
We define the $\epsilon$-thin part $M_{(0,\epsilon)}$ of a hyperbolic manifold $M$ to be 
\[ M_{(0,\epsilon)} = \{ x \in M \mid \text{injrad}(x) < \frac{\epsilon}{2} \} .\]
Similarly the $\epsilon$-thick part is defined to be 
\[  M_{[\epsilon, \infty )}  = \{ x \in M \mid \text{injrad}(x) \geq \frac{\epsilon}{2} \} \]

\end{defn}

\begin{theorem}[Margulis' Lemma, D3.3 in Benedetti and Petronio \cite{Benedetti1992}] \label{Margulis}

There exists a universal constant $\epsilon_n$ for each $n \geq 3$ such that the following holds.

Let $M$ be a complete oriented hyperbolic $n$-manifold (not necessarily compact or finite volume). The thin part $M_{(0,\epsilon_n)}$ is the union of pieces homeomorphic to one of the following types:

\begin{enumerate}
\item $\mathring{D}^{n-1} \times S^1$
\item $V \times (0,\infty)$ where V is a differentiable oriented $(n-1)$-manifold without boundary supporting a Euclidean structure.
\end{enumerate}
Furthermore,
\begin{itemize}
\item The pieces are a positive distance from one another.
\item The second case occurs only when the manifold is non-compact.
\item When $M$ is finite volume non-compact, the second case occurs and the manifolds $V$ are closed.
\end{itemize}
\end{theorem}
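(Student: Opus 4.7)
The plan is to derive this from the Zassenhaus--Kazhdan--Margulis lemma applied to the Lie group $G = \text{Isom}^+(\h^n)$. First I would establish the algebraic Margulis lemma: there is a neighborhood $U$ of the identity in $G$ such that every discrete subgroup generated by elements of $U$ is nilpotent. By transitivity of $G$ on $\h^n$ and compactness arguments, this yields $\epsilon_n > 0$ such that for every discrete $\Gamma \leq G$ and every $\tilde{x} \in \h^n$, the subgroup $\Gamma_{\epsilon_n}(\tilde{x}) = \langle \gamma \in \Gamma : d(\tilde{x}, \gamma \tilde{x}) < \epsilon_n \rangle$ is virtually nilpotent, with the key uniformity being independence of $\tilde{x}$.

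Next I would classify the torsion-free discrete virtually nilpotent subgroups of $\text{Isom}^+(\h^n)$, which is the relevant case since $\pi_1(M)$ acts freely. Using the trichotomy of hyperbolic isometries (loxodromic, parabolic, elliptic) together with the observation that commuting loxodromic and parabolic elements force incompatible fixed point structure at $\partial \h^n$, I would argue that such a subgroup must either (i) have a finite-index infinite cyclic subgroup generated by a loxodromic isometry, with all elements preserving its axis, or (ii) consist entirely of parabolic isometries sharing a single fixed point $p \in \partial \h^n$ and acting as a crystallographic-type group on horospheres based at $p$.

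Then for each $x \in M_{(0,\epsilon_n)}$, I would lift to $\tilde{x}$ and apply the classification to $\Gamma_{\epsilon_n}(\tilde{x})$. In case (i), the component of the thin part containing $x$ is the quotient of a tubular neighborhood of the loxodromic axis in $\h^n$, which is homeomorphic to $\mathring{D}^{n-1} \times S^1$. In case (ii), the component is the quotient of a horoball at $p$ by the parabolic stabilizer, which foliates as $V \times (0,\infty)$ with $V$ the flat quotient of a horosphere, hence a closed Euclidean manifold when the horoball quotient has finite volume. The pieces are a positive distance apart because if two pieces came arbitrarily close, the short-element groups associated to both would inject into a single $\Gamma_{\epsilon_n}(\tilde{x})$ at a nearby $\tilde{x}$, producing a virtually nilpotent group containing elements of distinct types, contradicting the classification.

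The main obstacle will be the classification step: showing that loxodromic and parabolic elements cannot jointly generate a discrete virtually nilpotent group, and ruling out multiple distinct loxodromic axes or parabolic fixed points within one such subgroup. A secondary obstacle is the compactness and finite-volume refinements: for compact $M$, case (ii) is excluded because horoballs project to unbounded regions, and for finite-volume $M$, the parabolic stabilizer at a cusp must be a lattice in the horosphere isometry group, forcing $V$ to be closed. These geometric exclusions rest on standard volume and diameter estimates that follow once the algebraic classification is in hand.
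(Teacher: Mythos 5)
This theorem is not proved in the paper at all---it is quoted from Benedetti--Petronio (Theorem D3.3)---and your outline is precisely the standard argument of that cited source: the Zassenhaus--Kazhdan--Margulis lemma giving uniform virtual nilpotence of the groups $\Gamma_{\epsilon_n}(\tilde{x})$, the classification of torsion-free discrete elementary subgroups of $\mathrm{Isom}^+(\mathbb{H}^n)$ into loxodromic type (infinite cyclic, axis-preserving) and parabolic type (Bieberbach-style action on horospheres), and the resulting tube/cusp description of the thin components with the compact and finite-volume refinements. Your sketch is correct in outline and takes essentially the same route as the proof the paper relies on; the only points needing real care are the ones you already flag, together with the positive-distance claim, which needs a little slack in the constant (an element that is $\epsilon_n$-short near one component is only $(\epsilon_n+2d)$-short at an intermediate point, so one invokes the Margulis lemma at a slightly larger constant or proves the decomposition for a slightly smaller one).
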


\begin{defn}
We shall refer to the two types of component of the thin part as tubes (also called Margulis tubes) and cusp neighbourhoods.
\end{defn}
Note that if a systole of a hyperbolic $n$-manifold  lies in the thin part of $M$, it must lie in one of these tubes. Otherwise it lies in a cusp neighbourhood, and could be homotoped further towards the cusp to reduce its length. 

 We now consider what these Margulis tubes are like if a systole of the manifold is very short.
First a quick lemma about closest point projection to the vertical axis in the upper half space model.

\begin{lemma} \label{DistanceFromAxis}
The distance from a point $\underline{x}$ to the vertical axis through the origin in the upper half space model is given by
\[ arcosh \left( \frac{\lVert \underline{x} \rVert}{x_n} \right) \]
where $\lVert \underline{x} \rVert$ is the standard Euclidean norm.
\end{lemma}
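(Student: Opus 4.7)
The plan is to reduce the problem to computing the hyperbolic distance between two explicit points: $\underline{x}$ and the foot of the perpendicular from $\underline{x}$ to the vertical axis. Once that foot is identified, the rest is a one-line application of the standard distance formula in the upper half-space model.

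To identify the foot of the perpendicular, I would argue geometrically. The vertical axis is a geodesic, and by rotational symmetry about that axis I may as well assume $\underline{x}$ lies in the vertical $2$-plane spanned by the axis and the direction of $\underline{x}$'s horizontal component. In that $2$-plane the geodesics are vertical lines and Euclidean semicircles perpendicular to the boundary. The geodesic through $\underline{x}$ meeting the vertical axis orthogonally must therefore be a Euclidean semicircle meeting the axis at its highest point, forcing its Euclidean center to lie at the origin. Its Euclidean radius is then $\lVert\underline{x}\rVert$, and it meets the axis at the point $\underline{p}=(0,\ldots,0,\lVert\underline{x}\rVert)$, which is thus the closest point on the axis to $\underline{x}$.

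With the closest point identified, I would then apply the standard formula
\[ \cosh d(\underline{p},\underline{q}) \;=\; 1 + \frac{\lVert \underline{p}-\underline{q}\rVert^2}{2\,p_n q_n} \]
to $\underline{p}=(0,\ldots,0,\lVert\underline{x}\rVert)$ and $\underline{q}=\underline{x}$. The numerator is $(x_1^2+\cdots+x_{n-1}^2)+(x_n-\lVert\underline{x}\rVert)^2 = 2\lVert\underline{x}\rVert^2 - 2x_n\lVert\underline{x}\rVert$, and the denominator is $2x_n\lVert\underline{x}\rVert$, which simplifies immediately to $\lVert\underline{x}\rVert/x_n$, giving the claimed formula.

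There is no real obstacle here; the only thing to be a little careful about is justifying that $\underline{p}$ really is the closest point rather than merely one among several critical points. If the geometric argument above feels unrigorous, I would instead do it analytically: writing a generic point on the axis as $(0,\ldots,0,t)$ and applying the same distance formula, one gets an expression of the form $\frac{r^2+x_n^2}{2x_n t}+\frac{t}{2x_n}$ (with $r$ the Euclidean distance from $\underline{x}$ to the axis) which by AM--GM is minimised at $t=\sqrt{r^2+x_n^2}=\lVert\underline{x}\rVert$, confirming both the location of the foot and the value $\operatorname{arcosh}(\lVert\underline{x}\rVert/x_n)$ of the minimum.
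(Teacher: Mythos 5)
Your proposal is correct and follows essentially the same route as the paper: identify the foot of the perpendicular as $(0,\ldots,0,\lVert\underline{x}\rVert)$ via the semicircular geodesic meeting the axis orthogonally, then apply the standard formula $\cosh d = 1 + \frac{\lVert\underline{p}-\underline{q}\rVert^2}{2p_nq_n}$ and simplify. Your additional AM--GM minimisation over points $(0,\ldots,0,t)$ is a nice self-contained verification of the closest-point claim, but it is not needed beyond what the paper does.
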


\begin{proof}

Let $\underline{x}:=(x_1, \ldots x_n)$ be a point in the upper half space model and note that the closest point projection of $\underline{x}$ to the vertical axis is $ \underline{y} := (0, \ldots,0, \lVert \underline{x} \rVert )$ as both points lie on a half circle with centre on the boundary at infinity which intersects the vertical axis at a right angle.

Now a formula for distance in the upper half space model is given by 

\[d( \underline{x}, \underline{y}) = arcosh\left( 1+ \frac{\sum_{i=1}^{n} (x_i - y_i)^2}{2x_ny_n} \right) \]

So substituting in our values gives us 

\[  d( \underline{x}, \underline{y}) 
= arcosh\left( 1+ \frac{\left( \sum_{i=1}^{n-1} (x_i)^2 \right) + (x_n -  \lVert \underline{x}\rVert)^2}{2x_n\lVert \underline{x} \rVert} \right)  \]

\[= arcosh\left( 1+ \frac{ 2 \lVert \underline{x} \rVert^2  - 2x_n \lVert \underline{x}\rVert}{2x_n\lVert \underline{x} \rVert} \right) = arcosh\left(\frac{  \lVert \underline{x} \rVert  }{x_n} \right)
\]
\end{proof}

\begin{lemma} \label{RotationDistance}

Let $\phi$ be a loxodromic isometry in $Isom(\h^n)$ acting on the upper half space model with fixed points at $0$ and $\infty$. Suppose the translation length of $\phi$ along its axis is $R$. If $\underline{x}$ is such that $\frac{\lVert \underline{x} \rVert}{x_n} \leq e^D$, then for each $1> a >0$, we can find an integer $0< k \leq \left( \frac{4e^{D}}{a} \right)^{n-1} $ s.t. $ d \left (\phi^k(\underline{x}), e^{kR}(\underline{x})\right )< a$.

\end{lemma}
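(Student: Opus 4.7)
I would put $\phi$ in an explicit normal form on the upper half-space, reduce the hyperbolic distance between $\phi^k(\underline{x})$ and $e^{kR}\underline{x}$ to a Euclidean displacement in $\R^{n-1}$, and then extract the required $k$ by a pigeon-hole on the compact orbit of the horizontal part of $\underline{x}$ under $\langle A\rangle$.

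A loxodromic of $\h^n$ fixing $0$ and $\infty$ in the upper half-space model with translation length $R$ acts as $\phi(\underline{x})=e^R(A\underline{v},x_n)$ where $\underline{v}:=(x_1,\ldots,x_{n-1})$ and $A\in O(n-1)$ is a Euclidean rotation of the horizontal factor. Iterating gives $\phi^k(\underline{x})=e^{kR}(A^k\underline{v},x_n)$ while $e^{kR}\underline{x}=e^{kR}(\underline{v},x_n)$, so the two points share the last coordinate $e^{kR}x_n$ and differ horizontally by $e^{kR}(A^k\underline{v}-\underline{v})$. Feeding this into the upper half-space distance formula used in the proof of Lemma \ref{DistanceFromAxis} cancels the $e^{kR}$ factors and yields
\[ d\bigl(\phi^k(\underline{x}),\, e^{kR}\underline{x}\bigr) = \text{arcosh}\!\left(1+\frac{\|A^k\underline{v}-\underline{v}\|^2}{2x_n^2}\right). \]
Because $\cosh(a)-1>a^2/2$ for every $a>0$, this distance is strictly below $a$ as soon as $\|A^k\underline{v}-\underline{v}\|\le ax_n$. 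The whole lemma therefore reduces to finding, within the claimed range for $k$, an iterate that displaces $\underline{v}$ by at most $ax_n$ in the Euclidean metric on $\R^{n-1}$.

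For the pigeon-hole, every iterate $A^k\underline{v}$ lies in the closed Euclidean ball $B\subset\R^{n-1}$ of radius $\|\underline{v}\|\le\|\underline{x}\|\le e^Dx_n$ (using the hypothesis $\|\underline{x}\|/x_n\le e^D$). I would partition $B$ into cells of Euclidean diameter at most $ax_n$, using axis-aligned sub-cubes of the enclosing cube $[-e^Dx_n,e^Dx_n]^{n-1}$ with side length tuned so that the cardinality is at most $N:=(4e^D/a)^{n-1}$. Among the $N+1$ iterates $\underline{v},A\underline{v},\ldots,A^N\underline{v}$ some two, $A^{k_1}\underline{v}$ and $A^{k_2}\underline{v}$ with $0\le k_1<k_2\le N$, must share a cell. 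Applying the Euclidean isometry $A^{-k_1}$ gives $\|A^{k_2-k_1}\underline{v}-\underline{v}\|=\|A^{k_2}\underline{v}-A^{k_1}\underline{v}\|\le ax_n$, so $k:=k_2-k_1$ lies in the required range $0<k\le N$.

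The one non-mechanical point is the covering count: one has to verify that the closed ball $B(0,e^Dx_n)\subset\R^{n-1}$ genuinely admits a partition into at most $(4e^D/a)^{n-1}$ pieces of diameter $\le ax_n$, and to keep track of constants carefully (a naive cube partition leaves a dimensional factor that has to be absorbed into the base of the exponent). I would handle this via the cube decomposition indicated above; after that, the lemma follows at once from the normal form of $\phi$ and the distance formula.
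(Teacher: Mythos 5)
Your reduction of the lemma to a Euclidean statement is correct and in fact a little cleaner than the paper's: writing $\phi^k(\underline{x})=e^{kR}(A^k\underline{v},x_n)$ and $e^{kR}\underline{x}=e^{kR}(\underline{v},x_n)$, the distance formula does give $d=\operatorname{arcosh}\bigl(1+\tfrac{\lVert A^k\underline{v}-\underline{v}\rVert^2}{2x_n^2}\bigr)$, and $\cosh a-1>a^2/2$ then shows it suffices to make $\lVert A^k\underline{v}-\underline{v}\rVert\le ax_n$. The gap is in the counting step, and it is exactly the point you flagged as ``non-mechanical'': an axis-aligned cube partition cannot be tuned to have both diameter $\le ax_n$ per cell and at most $(4e^D/a)^{n-1}$ cells once $n\ge 6$. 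A sub-cube of side $s$ has diameter $s\sqrt{n-1}$, so diameter $\le ax_n$ forces $s\le ax_n/\sqrt{n-1}$ and hence at least $\bigl(\tfrac{2e^D\sqrt{n-1}}{a}\bigr)^{n-1}$ cells, which exceeds $(4e^D/a)^{n-1}$ as soon as $\sqrt{n-1}>2$; conversely, capping the number of cells at $(4e^D/a)^{n-1}$ forces $s\ge ax_n/2$ and diameter $\ge \tfrac{a x_n\sqrt{n-1}}{2}>ax_n$. The dimensional factor cannot be ``absorbed into the base of the exponent'', because the base $4$ is part of the statement being proved and is used quantitatively downstream (it produces the $-\log 4$ term in Theorem \ref{InjRadDiameter}). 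Standard covering-number bounds (maximal separated sets) also only give $\bigl(\tfrac{4e^D}{a}+1\bigr)^{n-1}$, which again overshoots.

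The fix is to replace the partition/pigeonhole by the packing argument the paper uses, where the dimensional constants cancel exactly: put open Euclidean balls of radius $ax_n/2$ about the iterates $A^k\underline{v}$ (equivalently, work in the hyperplane $x_n=1$ as the paper does). Each such ball lies inside the ball of radius $2e^Dx_n$, since $\lVert A^k\underline{v}\rVert\le e^Dx_n$ and $a<1\le e^D$. If you take more than $\dfrac{\operatorname{vol}B(2e^Dx_n)}{\operatorname{vol}B(ax_n/2)}=\Bigl(\dfrac{4e^D}{a}\Bigr)^{n-1}$ iterates and all the balls were disjoint, their total volume would exceed that of the containing ball; here the ratio of volumes of same-dimensional balls is just the ratio of radii to the power $n-1$, with no $\sqrt{n-1}$ or $\omega_{n-1}$ factor. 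So two balls overlap, two iterates $A^i\underline{v},A^j\underline{v}$ ($i<j$) are within Euclidean distance $ax_n$, and applying $A^{-i}$ gives $k=j-i\le(4e^D/a)^{n-1}$ with $\lVert A^k\underline{v}-\underline{v}\rVert< ax_n$, after which your first paragraph finishes the proof.
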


\begin{proof}

A loxodromic isometry $\phi$ of translation length $R$ acting on the  upper half space model of hyperbolic space fixing $0$ and $\infty$ is a composition of a scaling $\underline{x} \mapsto e^R \underline{x}$ and an orthogonal map $A \in SO(n-1)$ acting only on the first $n-1$ variables. Thus if $ \underline{x} = (x_1, \ldots x_n)$ then 
\[d(\phi^k(\underline{x}), e^{kR}\underline{x}) = d(A^k \cdot e^{kR}(\underline{x}), e^{kR}\underline{x}) = d(A^k \underline{x}, \underline{x}) \]

We can apply a further isometry such that $\underline{x}, A^k\underline{x}$ both lie on the plane $x_n=1$ and by Lemma \ref{DistanceFromAxis} it is still true that $\frac{\lVert \underline{x} \rVert}{x_n} \leq e^D$ and hence if we let $\pi$ be the projection map onto the first $n-1$ factors we get
\[e^{2D}\geq \lVert \underline{x} \rVert^2 = \lVert \pi(\underline{x})\rVert^2 +1 \qquad \text{ and so } \qquad \sqrt{e^{2D} -1 } \geq  \lVert \pi(\underline{x}) \rVert. \]

In particular, if we restrict to the hyperplane $\R^{n-1} \times \{ 1 \}$ the $\frac{a}{2}$-ball around each of the translates $A^k \underline{x}$ lies within the $2e^{D}$-ball in $\R^{n-1} \times {1}$. Indeed, let $\underline{x}'$ lie in such an $\frac{a}{2}$-ball, then as $a < 1$:
\[ \lVert \pi(\underline{x}') \rVert^2 \leq \lVert \pi(\underline{x}) \rVert^2 +a\lVert \pi(\underline{x}) \rVert  +\frac{a}{2} < e^{2D} - 1 + a \sqrt{e^{2D} -1 } + \frac{a}{2} < 2e^D \]
Now let $vol (B(r)) $ denote the volume of the $r$-ball in Euclidean $(n-1)$-space and define
\[ f(a) = \frac{ vol (B(2e^{D}))}{vol(B(\frac{a}{2}))} .\]
If $k> f(a)$ then there must be some pair of points $A^i\underline{x}, A^j\underline{x}$ with $i<j$ such that their $\frac{a}{2}$-balls overlap, otherwise all the balls are disjoint but the sum of their volumes is greater than the volume of the $2e^D$-ball containing them.

Thus the result follows as 
\[\frac{ vol (B(2e^D))}{vol(B(\frac{a}{2}))} = \frac{(2e^D)^{n-1}}{\left(\frac{a}{2}\right)^{n-1}} \frac{vol(B(1))}{vol(B(1))} =\left(\frac{(4e^{D})}{a}\right)^{n-1} \]
and if the neighbouring balls of $A^i\underline{x}, A^j\underline{x}$ overlap then so do the balls round $\underline{x}, A^{j-i}\underline{x}$ and so $A^{j-i}\underline{x}$ is within Euclidean  (and hence also hyperbolic) distance $a$ of $\underline{x}$ as required.
\end{proof}

\begin{theorem} \label{InjRadDiameter}
Suppose that $M$ is a finite volume hyperbolic $n$-manifold  $(n \geq 3)$ with  systole(s) of length $R \leq 2\epsilon_n$, where $\epsilon_n$ is the Margulis constant in dimension $n$.
Then the distance from a systole to the boundary of the Margulis tube containing it is bounded below by \[\frac{1}{n} log\left(\frac{1}{R}\right) +  log (\epsilon_n) -log( 4). \]
\end{theorem}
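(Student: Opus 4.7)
The plan is to work in the universal cover: the systole lifts to a complete geodesic axis in $\mathbb{H}^n$ stabilised by a loxodromic element $\phi$ of translation length $R$, and after conjugating I may assume this axis is the vertical line through the origin in the upper half space model, so $\phi$ is the composition of the scaling $\underline{x}\mapsto e^R\underline{x}$ with a rotation $A\in SO(n-1)$ acting on the first $n-1$ coordinates. To establish the stated lower bound $D$ on the distance from the systole to the boundary of its Margulis tube, it suffices to show that every point $\underline{x}$ within hyperbolic distance $D$ of this axis has injectivity radius less than $\epsilon_n/2$ in $M$; by Theorem \ref{Margulis} and the separation between distinct thin components, the path-connectedness of the $D$-neighbourhood of the axis (which contains the systole) then places every such $\underline{x}$ in the same Margulis tube as the systole.

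To produce a short loop based at $\underline{x}$ I look for a nontrivial power $\phi^k$ with $d(\underline{x},\phi^k\underline{x})<\epsilon_n$, which is exactly what Lemma \ref{RotationDistance} is designed to deliver. Given an auxiliary parameter $0<a<1$ to be chosen later, the lemma (applied after observing via Lemma \ref{DistanceFromAxis} that $\|\underline{x}\|/x_n \leq e^D$) supplies an integer $0<k\leq (4e^D/a)^{n-1}$ with $d(\phi^k(\underline{x}),e^{kR}\underline{x})<a$. The triangle inequality then yields
\[ d(\underline{x},\phi^k\underline{x}) < d(\underline{x},e^{kR}\underline{x}) + a, \]
reducing the problem to controlling the pure-translation displacement $d(\underline{x},e^{kR}\underline{x})$.

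Since $e^{kR}$ is translation by $kR$ along the axis, the Saccheri quadrilateral identity gives
\[ \sinh\!\bigl(d(\underline{x},e^{kR}\underline{x})/2\bigr) = \sinh(kR/2)\cosh(D'), \]
where $D'\leq D$ is the actual hyperbolic distance of $\underline{x}$ from the axis. Combining $\cosh(D)\leq e^D$ with the near-linearity of $\sinh$ on the relevant short interval (guaranteed because $kR$ stays of order $R^{1/n}$ and the hypothesis $R\leq 2\epsilon_n$ keeps this small) yields a bound of the shape $d(\underline{x},e^{kR}\underline{x}) \leq (4/a)^{n-1} R\, e^{nD}$. Requiring this plus $a$ to be less than $\epsilon_n$ rearranges to
\[ D \leq \frac{1}{n}\log\!\left(\frac{(\epsilon_n-a)(a/4)^{n-1}}{R}\right), \]
and a judicious choice of $a$ on the order of $\epsilon_n$ turns this into the stated bound $\frac{1}{n}\log(1/R)+\log\epsilon_n-\log 4$.

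The main obstacle I anticipate is extracting the coefficient $\log 4$ exactly: the calculus-level maximisation of $(\epsilon_n-a)a^{n-1}$ at $a=(n-1)\epsilon_n/n$ produces a bound that is numerically close but not quite in the stated closed form. I expect the argument either to be slightly conservative in one of the hyperbolic trig estimates or to exploit $R\leq 2\epsilon_n$ more aggressively to suppress higher-order terms in $\sinh$ and $\cosh$, so that a clean choice such as $a=\epsilon_n/2$ (or a similarly tidy value) produces precisely the stated coefficients.
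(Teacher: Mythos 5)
Your overall strategy is the same as the paper's: lift the systole to a loxodromic $\phi$ with vertical axis, use Lemma \ref{DistanceFromAxis} and the pigeonhole Lemma \ref{RotationDistance} to find a power $\phi^k$ with $k \le (4e^D/a)^{n-1}$ that moves a point at distance $\le D$ from the axis by the translation term plus $a$, and then invoke the thick-thin decomposition. The gap is in the endgame, and it is exactly the one you flag yourself, except that with your displacement budget the stated constants are not merely ``numerically close'' but unattainable. You require $d(\underline{x},\phi^k\underline{x}) < \epsilon_n$ (injectivity radius below $\epsilon_n/2$), so you must split $\epsilon_n$ between the pigeonhole error $a$ and the translation term, which even after exact linearisation is of size about $kR\cosh(D')$ with $k$ possibly as large as $(4e^D/a)^{n-1}$. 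The resulting requirement is $e^{nD} < (\epsilon_n-a)\,a^{n-1}\,4^{-(n-1)}/R$, and to recover the stated bound you would need $(\epsilon_n-a)a^{n-1} \ge \epsilon_n^n/4$; but $\max_a (\epsilon_n-a)a^{n-1} = \tfrac{1}{n}\bigl(1-\tfrac1n\bigr)^{n-1}\epsilon_n^n$, which is strictly below $\epsilon_n^n/4$ for every $n \ge 3$ (for $n=3$ it is $\tfrac{4}{27}\epsilon_3^3$). So no ``judicious choice of $a$'', and no sharper $\sinh$ or $\cosh$ estimate, closes the deficit: within your framework the provable bound falls short of $\tfrac1n\log(1/R)+\log\epsilon_n-\log 4$ by an additive term of order $\tfrac1n\log n$.

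The paper avoids this by working with the displacement threshold $2\epsilon_n$ rather than $\epsilon_n$: it takes $a=\epsilon_n$ in Lemma \ref{RotationDistance}, so it only needs the translation contribution to be $<\epsilon_n$, and this follows from $kR+(e^{kR}-1)e^D \le kR(1+2e^D) \le R\left(\tfrac{4e^D}{\epsilon_n}\right)^n\epsilon_n < \epsilon_n$ precisely when $D < \tfrac1n\log(1/R)+\log\epsilon_n-\log 4$ (using $e^x-1<2x$ on $(0,1)$ and $2e^D+1\le 4e^D$). The total displacement is then $<2\epsilon_n$, which is the threshold consistent with the hypothesis $R\le 2\epsilon_n$ (the systole itself is a loop of length up to $2\epsilon_n$) and with how the paper applies Margulis' Lemma to conclude that the whole $D$-neighbourhood of the axis lies in the tube containing the systole. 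If you rerun your argument with this $2\epsilon_n$ threshold --- equivalently, keep your Saccheri-quadrilateral identity but allot a full $\epsilon_n$ to the rotation error $a$ and a full $\epsilon_n$ to the translation term --- the constants $\log\epsilon_n-\log 4$ drop out exactly as stated. Your Saccheri identity is otherwise a perfectly good, slightly sharper substitute for the paper's decomposition through the intermediate vertical translate $\underline{x}'$; the defect is only in the choice of displacement budget.
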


\begin{proof}
Let $\gamma$ be a systole of $M$. We consider $N = N(\gamma, D)$ in an attempt to  bound below the maximum value of $D$ such that this neighbourhood lies in the thin part of $M$. 

By definition $\gamma$ is a geodesic and so the corresponding isometry of hyperbolic space in the deck group of $M$ is a loxodromic with geodesic axis $\tilde{\gamma}$ which projects down to $\gamma$ under the covering map. By conjugation this can be modelled in the upper half space model of hyperbolic space by the isometry 
\[ \phi: \underline{v} \mapsto A \cdot e^R\underline{v} \]
with geodesic axis the geodesic from $0$ to $\infty$ where $R$ is the translation length of $\phi$ and $A$ is an orthogonal map on the first $n-1$ variables.

Now let $\underline{x}: = (x_1, \ldots, x_{n})$ be a point in $\h^3$ within distance $D$ from the axis of $\phi$, so in particular $\frac{\lVert \underline{x} \rVert}{x_n} <  cosh (D) < e^D $ and let $\underline{x}' := (x_1, \ldots, x_{n-1}, e^{kR}x_n)$ be the vertical translate of $\underline{x}$ by hyperbolic distance $kR$. Then 
\[ d(\underline{x},\phi^k(\underline{x})) \leq d(\underline{x},\underline{x}') + d(\underline{x}', e^{kR}\underline{x}) + d(e^{kR}\underline{x}, \phi^k(\underline{x})).\]

Now $d(\underline{x},\underline{x}')$ is just the length of the vertical geodesic from $x$ to $x'$. This is $log(e^{kR}x_n) - log(x_n) = log(e^kR)=kR$. 

Furthermore, the distance from $\underline{x}'$ to $e^{kR}\underline{x}$ is bounded above by their distance in the horosphere, which is Euclidean up to scaling. So if $\pi(x)$ represents the projection to the first $n-1$ factors, the following holds.
\[ d(\underline{x}',e^{kR}(\underline{x})) \leq \frac{1}{e^{kR}x_n} \lVert e^{kR} \pi(x)- \pi(x) \rVert = \frac{(e^{kR}-1) \lVert \pi(x) \rVert}{e^{kR}x_n} \leq (e^{kR}-1)(e^D) \]
and so by Lemma \ref{RotationDistance}
\[ d(x,\phi(x)) \leq kR + (e^{kR}-1)e^D + a.  \]


At this point we note that the following statement follows by simple rearrangment and taking logarithms

\begin{equation}
  D < \frac{1}{n} log\left(\frac{1}{R}\right) +  log (\epsilon_n) -log( 4)\quad \Leftrightarrow \quad R\left( \frac{4e^{D}}{\epsilon_n} \right)^{n} <   1 
\end{equation}


Thus, if we set $a = \epsilon_n$ and require
\[ D < \frac{1}{n} log\left(\frac{1}{R}\right) +  log (\epsilon_n) -log( 4)\]
then for $k <   \left( \frac{4e^{D}}{a} \right)^{n-1} $
it is both true that 
\[ kR = R\left( \frac{4e^{D}}{\epsilon_n} \right)^{n-1} < R\left( \frac{4e^{D}}{\epsilon_n} \right)^{n} <1  \] 
and
\[ kR(2e^D+1) <  \left( \frac{4e^{D}}{\epsilon_n} \right)^{n-1} R(4e^D) < R \left( \frac{4e^{D}}{\epsilon_n}\right) ^{n}\epsilon_n < \epsilon_n \] 

Combining this with the fact that $e^x-1 < 2x$ on the interval $(0,1)$ gives that
\[ d(\underline{x}, \phi(\underline{x})) \leq kR + (e^{kR}-1)e^D + a \leq ( 1 + 2e^D)kR  + \epsilon_n < 2\epsilon_n \]
and hence if $\underline{x}$ is within distance $D$ of the axis of $\phi$ then $\underline{x}$ maps to the thin part of $M$. Thus the thick-thin decomposition implies that the diameter of the Margulis tube is at least
\[\frac{1}{n} log\left(\frac{1}{R}\right) +  log (\epsilon_n) -log( 4) \] 
\end{proof}

If our desire is to give an explicit relation bound on injectivity radius and diameter as a function of dimension, and number of $n$-simplices, then we need to understand $\epsilon_n$ as a function of $n$. Fortunately, this an area of much research and lower bounds are known.  Robert Meyerhoff gives a bound on $\epsilon_3$ \cite{Meyerhoff87}. For general $n$, a result of Ruth Kellerhals \cite{Kellerhalls04} gives a  lower bound for $\epsilon_n$.

\begin{theorem}[Meyerhoff \cite{Meyerhoff87}]\label{Meyerhoff}
With the definition of the thick-thin decomposition given in Theorem \ref{Margulis}, the value of $\epsilon_3$ is at least $0.052$ .
\end{theorem}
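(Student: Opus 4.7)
The approach is a contradiction via J{\o}rgensen's inequality. Suppose $0.052$ fails to serve as a Margulis constant in dimension $3$; then there exist a complete oriented hyperbolic $3$-manifold $M$ and a point $x \in M$ whose component in the $0.052$-thin part is not of the tube or cusp type listed in Theorem \ref{Margulis}. Lifting to the universal cover, at a chosen preimage $\tilde{x} \in \h^3$ there are two covering isometries $A, B \in \text{Isom}^+(\h^3) \cong \text{PSL}(2,\C)$ with $d(\tilde{x}, A\tilde{x}) < 0.052$ and $d(\tilde{x}, B\tilde{x}) < 0.052$ whose group $\langle A,B\rangle$ is non-elementary. If instead $\langle A, B \rangle$ were elementary, then $A$ and $B$ would share either an invariant geodesic axis (giving a Margulis tube containing $x$) or a common fixed point at infinity with both elements parabolic (giving a cusp neighborhood), matching the thick–thin picture; hence non-elementarity is what we must rule out.

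The main tool is J{\o}rgensen's inequality: for any two elements $A, B$ generating a non-elementary discrete subgroup of $\text{PSL}(2,\C)$,
\[ |\text{tr}^2(A) - 4| + |\text{tr}(ABA^{-1}B^{-1}) - 2| \geq 1. \]
The plan is to bound both summands from above by quantities controlled by the displacements $d(\tilde{x}, A\tilde{x})$ and $d(\tilde{x}, B\tilde{x})$, and then show that when both displacements are less than $0.052$ the total is strictly less than $1$, contradicting the inequality. Writing $A$ as a loxodromic of translation length $\tau_A$, rotation angle $\theta_A$, and axial distance $\rho_A$ from $\tilde{x}$, one has the clean formula $|\text{tr}^2(A)-4| = 4\bigl|\sinh\!\tfrac{\tau_A + i\theta_A}{2}\bigr|^2$, while a standard displacement formula expresses $\cosh d(\tilde{x}, A\tilde{x})$ in terms of $\tau_A, \theta_A, \rho_A$. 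A small displacement at $\tilde{x}$ therefore forces $\tau_A$ and $\theta_A$ to be small, the delicate case being large $\rho_A$, where the displacement constraint allows $\tau_A + i\theta_A$ to be near zero only after careful trigonometric bookkeeping.

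For the commutator term, I would expand $\text{tr}[A,B] - 2$ as a polynomial in the matrix entries of $A$ and $B$ and use that both matrices, after a suitable normalization placing $\tilde{x}$ at a canonical point of the upper half space, are close to $\pm I$ when their displacements at $\tilde{x}$ are small. This yields a bound of order $O(\epsilon^2)$ on $|\text{tr}[A,B] - 2|$ when both displacements are at most $\epsilon$, with explicit constants that again depend on the axial distances. Combining the two bounds produces an explicit function $\Phi(\epsilon)$ such that J{\o}rgensen's sum is at most $\Phi(\epsilon)$ whenever $\max(d(\tilde{x}, A\tilde{x}), d(\tilde{x}, B\tilde{x})) < \epsilon$.

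The principal obstacle is the sharpness of the constant. The bounds above depend non-trivially on $\rho_A, \rho_B, \tau_A, \theta_A, \tau_B, \theta_B$ subject to the displacement constraints, so establishing $\Phi(0.052) < 1$ requires a joint optimization over all geometric configurations and over the relevant isometry types (both loxodromic, one or both parabolic, and the degenerate cases where axes are nearly parallel or nearly common). This case-by-case extremization is the part that cannot be shortcut by order-of-magnitude estimates, and it is precisely this numerical analysis that produces the explicit threshold $0.052$ in Meyerhoff's original argument.
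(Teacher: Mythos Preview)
The paper does not prove this statement at all: Theorem~\ref{Meyerhoff} is quoted from Meyerhoff's paper \cite{Meyerhoff87} and used as a black box (alongside Kellerhals' bound, Theorem~\ref{Kellerhalls}) to make the constants in Theorem~\ref{InjRadDiameter} explicit. There is therefore no ``paper's own proof'' to compare against; the author simply cites the result.

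Your sketch is, broadly, a faithful outline of Meyerhoff's original argument: one assumes two short covering isometries at a point generate a non-elementary discrete group, invokes J{\o}rgensen's inequality
\[
|\mathrm{tr}^2(A)-4| + |\mathrm{tr}([A,B])-2| \geq 1,
\]
and then bounds both terms in terms of the complex translation length and the displacement at $\tilde{x}$ to force a contradiction for sufficiently small displacement. That is indeed how the constant arises.

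Two cautions about the write-up, should you ever flesh it out. First, the claim that after normalizing $\tilde{x}$ the matrices are ``close to $\pm I$'' is not the right heuristic: a loxodromic with tiny translation length but rotation angle near $\pi$ has small displacement on its axis yet trace near $-2$, and the actual control comes from the displacement formula $\cosh d(\tilde{x},A\tilde{x}) = 1 + 2|\sinh\tfrac{\tau+i\theta}{2}|^2\cosh^2\rho + \ldots$, which ties $|\mathrm{tr}^2(A)-4|$ directly to the displacement without passing through ``close to identity''. Second, Meyerhoff's extraction of the specific value $0.052$ is not a free optimization over all configurations as you describe; it proceeds by iterating J{\o}rgensen on the sequence $A, [A,B], [A,[A,B]], \ldots$ and summing a convergent series, which is what makes the constant computable rather than merely existent. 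Your final paragraph correctly flags that the numerics are the crux, but the mechanism you propose (joint extremization over $\rho_A,\rho_B,\tau_A,\theta_A,\tau_B,\theta_B$) is not how the sharp threshold is actually obtained.
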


\begin{theorem}[Kellerhalls \cite{Kellerhalls04}]\label{Kellerhalls}

The value of $\epsilon_n$ is bounded below by $\frac{1}{(6\pi)^n}$.

\end{theorem}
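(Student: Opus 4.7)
The result is cited from Kellerhals \cite{Kellerhalls04}, so the natural plan is simply to invoke that reference; but let me sketch the strategy one would follow to reprove it from scratch. I would start by recalling that the Margulis constant $\epsilon_n$ can be characterized (via the Kazhdan--Margulis theorem) as a lower bound on the radius $r$ such that any discrete subgroup of $Isom(\h^n)$ generated by elements each displacing a common point $x$ by at most $r$ is virtually nilpotent. So to prove $\epsilon_n \geq 1/(6\pi)^n$, one must exhibit this $r$ explicitly and verify the nilpotence conclusion.

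The natural approach passes through the Lie algebra $\mathfrak{so}(n,1)$ via the exponential map. After conjugating so that $x$ sits at a standard basepoint, an isometry $g$ displacing $x$ by at most $r$ has the form $g = \exp(X)$ for some $X \in \mathfrak{so}(n,1)$ whose operator norm is controlled by $r$ up to a dimension-dependent factor. If $g_1 = \exp(X_1)$ and $g_2 = \exp(X_2)$ both displace $x$ by at most $r$ and generate a discrete group, then the Baker--Campbell--Hausdorff expansion gives $[g_1,g_2] = \exp(Y)$ with $\|Y\| \leq C \|X_1\|\|X_2\|$ for an explicit $C$ depending only on the commutator estimate in $\mathfrak{so}(n,1)$. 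Iterating this gives a sequence of group elements whose Lie-algebra representatives have doubly-exponentially decreasing norms; discreteness then forces this sequence to stabilize, yielding nilpotence of the generated group. Combined with the Jordan-type bound on finite subgroups of $O(n)$, this is enough to run the Kazhdan--Margulis argument and extract a concrete $r$.

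The main technical obstacle is squeezing out the explicit constant $6\pi$: one must carefully track, at each step, the Killing-form norms on $\mathfrak{so}(n,1)$, the operator norm of the adjoint action on the Lie algebra, and the distinction between ``displacement at a point'' and ``norm of a Lie-algebra element.'' The factor $6\pi$ arises heuristically as the product of the $\pi$ coming from the maximal rotation angle on which the exponential is injective on a ball in $\mathfrak{so}(n)$ and a numerical constant from the commutator estimate; pinning down an optimal such constant uniformly in $n$ is the delicate part of Kellerhals' argument. Since Theorem \ref{InjRadDiameter} uses this result purely as a black box in the bound on the Margulis tube diameter, we need only the clean statement $\epsilon_n \geq 1/(6\pi)^n$ and can import it wholesale.
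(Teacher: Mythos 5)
The paper offers no proof of this statement at all: it is imported verbatim from Kellerhals \cite{Kellerhalls04} (with the remark that her paper actually gives a sharper bound), which is exactly what you propose to do, so your treatment matches the paper's. Your appended sketch of a Kazhdan--Margulis/Zassenhaus-type argument is not required for the paper's purposes and is not verified against Kellerhals' actual method, but since both you and the paper use the bound purely as a black box, citing the reference is sufficient.
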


In fact Kellerhals' paper gives a tighter lower bound than this, but we shall use this coarser bound for simplicity of expression.

\begin{theorem} \label{MainThm2}
Given a closed hyperbolic $n$-manifold $(n \geq 3)$ triangulated by $t$ \linebreak $n$-simplices, let $R$ be the length of a systole in $M$, then
\[ R \geq \frac{1}{2^{(nt)^{O(n^4t)} }}. \]
\end{theorem}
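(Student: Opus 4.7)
The plan is to combine Theorem \ref{EdgeBound} with Theorem \ref{InjRadDiameter}, using the surjectivity of the homotopy equivalence (Lemma \ref{HomEqSurj}) to translate the edge length bound into a diameter bound for $M$, and Kellerhals' estimate (Theorem \ref{Kellerhalls}) to control the dependence on $\epsilon_n$. First I would invoke Lemma \ref{HomotopyEquiv} together with Theorem \ref{EdgeBound} to produce a homotopy equivalence $F: M \to X$ whose image in $X$ is a union of $t$ geodesic $n$-simplices, each with every edge of length at most $(nt)^{O(n^4t)}$. By Lemma \ref{HomEqSurj} the image of $F$ is all of $X$, so these simplices cover the manifold.

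Next I would bound $\operatorname{diam}(X)$. Since each of these $n$-simplices is the convex hull of its vertices in $\mathbb{H}^n$, its diameter is realized between vertices and is therefore at most the length of its longest edge, i.e.\ at most $(nt)^{O(n^4t)}$. Given any two points $p,q \in X$, one can connect them via a simplicial path passing through at most $t$ of these geodesic simplices, so
\[
\operatorname{diam}(X) \leq t \cdot (nt)^{O(n^4t)} = (nt)^{O(n^4t)},
\]
where the factor of $t$ is absorbed into the big O exponent.

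Now split into two cases on the systole length $R$. If $R > 2\epsilon_n$ then using $\epsilon_n \geq 1/(6\pi)^n$ from Theorem \ref{Kellerhalls}, we have $R > 2/(6\pi)^n$, which is already much larger than $1/2^{(nt)^{O(n^4t)}}$, so the bound is immediate. If instead $R \leq 2\epsilon_n$, then Theorem \ref{InjRadDiameter} applies: the Margulis tube around a systole contains a metric neighborhood of radius at least $\tfrac{1}{n}\log(1/R) + \log(\epsilon_n) - \log 4$ around the core curve. Since this tube embeds in $X$, its radius is bounded above by $\operatorname{diam}(X)$, giving
\[
\tfrac{1}{n}\log(1/R) + \log(\epsilon_n) - \log 4 \leq (nt)^{O(n^4t)}.
\]
Rearranging and applying $\log(\epsilon_n) \geq -n\log(6\pi)$, we obtain $\log(1/R) \leq n \cdot (nt)^{O(n^4t)} + n^2 \log(6\pi) + n\log 4 = (nt)^{O(n^4t)}$, whence $R \geq 1/2^{(nt)^{O(n^4t)}}$.

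There is no real obstacle left at this stage; the substantive work was already done in establishing Theorems \ref{EdgeBound} and \ref{InjRadDiameter}. The only point requiring care is the bookkeeping in the big-O exponent (verifying that the multiplicative factors of $n$, $t$, and the Kellerhals constants all get absorbed), and the observation that diameter of a geodesic simplex is attained at vertices, which is what lets the edge-length bound upgrade to a diameter bound.
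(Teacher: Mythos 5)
Your proposal is correct and follows essentially the same route as the paper: bound $\operatorname{diam}(X)$ by $t$ times the edge-length bound from Theorem \ref{EdgeBound} (using surjectivity of $F$ so the geodesic simplices cover $X$), split into the cases $R > 2\epsilon_n$ and $R \leq 2\epsilon_n$, and in the latter case compare the tube radius from Theorem \ref{InjRadDiameter} with $\operatorname{diam}(X)$, finishing with Kellerhals' bound on $\epsilon_n$. The paper phrases the case split as the thin part being empty or not, but this is the same dichotomy, and your extra remarks (diameter of a geodesic simplex attained at vertices, chaining through at most $t$ simplices) just make explicit what the paper leaves implicit.
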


\begin{proof}

Let $X$ be the hyperbolic manifold defined in Lemma \ref{HomotopyEquiv}, then by Lemma \ref{EdgeBound} we know that $X$ admits a cover by $t$ immersed hyperbolic $n$-simplices with edge lengths bounded above by some bound $B(t)$ with
\[ B(t) \leq (nt)^{O(n^4t)}. \] 
Pick some $\epsilon > \frac{1}{(6\pi)^n}$ so that $M_{(0,\epsilon)}$  satisfies the conclusion of Theorem \ref{Margulis}. This is possible as the bound in Theorem \ref{Kellerhalls} is not strict.

If the thin part $M_{(0,\epsilon)}$ is empty, then $R \geq \frac{2}{(6\pi)^n}$ and so the conclusion holds. \\
Suppose the thin part is non empty, then Theorem \ref{InjRadDiameter} applies. 
Consider a systole $\gamma$. This curve has length $R$ and lies in the centre of one of the tubes of the thin part. We know from the above that the diameter of this tube is at least
\[ \frac{1}{n} log(1/R) +log (\frac{\epsilon_n}{4}). \]

However, we know that $M$ is covered by $t$ $n$-simplices of bounded edge length, and thus of bounded diameter, and so the diameter of $M$ is at most 
\[t B(t) \leq t(nt)^{O(n^4t)} \]
and hence

\[ \frac{1}{n} log(1/R) +log (\frac{\epsilon_n}{4}) \leq t(nt)^{O(n^4t)} .\]
Simplifying big O notation further and applying Theorem \ref{Kellerhalls}
\[ log\left(\frac{1}{R}\right) \leq   (nt)^{O(n^4t)} + nlog( 4(6\pi)^n)   .\]
Taking exponentials on both sides gives
\[\frac{1}{R} \leq e ^{(nt)^{O(n^4t)}} (24\pi)^{n^2}  \] which simplifies, after rearrangement, to

\[ R \geq \frac{1}{2^{(nt)^{O(n^4t)} }} \]
\end{proof}

\section{The Cusped Case} \label{SecCusp}

With some small but technical adaptations, the arguments from the closed case hold in the cusped (finite volume) case where $M$ is given by a finite semi-ideal triangulation and $M$ admits a finite volume hyperbolic structure.

\begin{lemma} \label{CuspedSurjection}
The homotopy equivalence defined in Lemma \ref{HomotopyEquiv} is surjective also in the finite-volume case.
\end{lemma}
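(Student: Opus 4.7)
The strategy is to adapt the argument of Lemma \ref{HomEqSurj} by truncating the cusps and reducing to a compact manifold-with-boundary version. First I would verify that the map $F$ from Lemma \ref{HomotopyEquiv} respects the cusp structure: each lift of an ideal vertex $v_i$ in $\tilde{M}$ is sent by $\tilde{F}$ to a parabolic fixed point $p \in \partial \h^n$, and the (semi-ideal) simplices meeting $v_i$ are sent to geodesic simplices meeting $p$. Since the thin end of any geodesic simplex with an ideal vertex at $p$ lies in arbitrarily small horoballs based at $p$, a sufficiently small open star of $v_i$ (minus $v_i$) is carried into a horoball neighborhood of $p$, whose quotient is a cusp neighborhood of $X$. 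Conversely, by compactness of the link and the fact that the straight-line simplices near $v_i$ cover a full horospherical cross-section, the image of this neighborhood also contains a slightly smaller cusp neighborhood.

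Next I would fix disjoint cusp neighborhoods $U_i \subset M$ and $V_i \subset X$ with $F(U_i) \subseteq V_i$, and choose slightly smaller $V_i' \subseteq V_i$ with $V_i' \subseteq F(U_i)$. Set $M^{*} = M \setminus \bigcup U_i$ and $X^{*} = X \setminus \bigcup V_i'$; both are compact oriented $n$-manifolds with boundary. After a small homotopy supported in collar neighborhoods, $F$ restricts to a map of pairs
\[
F^{*} : (M^{*}, \partial M^{*}) \longrightarrow (X^{*}, \partial X^{*}).
\]
Because the inclusions $M^{*} \hookrightarrow M$ and $X^{*} \hookrightarrow X$ are homotopy equivalences (the cusp ends deformation retract onto their horospherical cross-sections), and $F$ is itself a homotopy equivalence, $F^{*}$ is also a homotopy equivalence of pairs.

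Finally, I apply the relative version of the argument of Lemma \ref{HomEqSurj}. The map $F^{*}$ induces an isomorphism $H_n(M^{*}, \partial M^{*}) \to H_n(X^{*}, \partial X^{*})$ sending the fundamental class to the fundamental class. If some $y \in \mathrm{int}(X^{*})$ were missed by $F^{*}$, then the composition with the excision isomorphism $H_n(X^{*}, \partial X^{*}) \to H_n(X^{*}, X^{*} - \{y\}) \cong \Z$ would send $[M^{*}, \partial M^{*}]$ to both a generator and to $0$ (since the image avoids $y$), contradicting $F^{*}$ being a homotopy equivalence. Hence $F^{*}(M^{*}) \supseteq \mathrm{int}(X^{*})$, and combined with $V_i' \subseteq F(U_i)$, we conclude $F(M) = X$.

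The main obstacle is the first step: translating the combinatorial construction of Lemma \ref{HomotopyEquiv} into a precise geometric statement about how neighborhoods of ideal vertices are mapped. One has to work simplex-by-simplex, using the explicit geometry of semi-ideal geodesic simplices near their ideal vertex (in particular that their thin ends are contained in horoball neighborhoods and surject onto a neighborhood of the parabolic fixed point in the horosphere), and piece these local estimates together compatibly across the finitely many cusp ends.
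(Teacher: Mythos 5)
Your truncation strategy can be made to work, but as written it has two genuine gaps, and it is worth noting that the paper takes a shorter route that avoids both. First, surjectivity is not a homotopy-invariant property, and the lemma is about the specific straightened map $F$: after you homotope $F$ in the collars to obtain the map of pairs $F^{*}$, the conclusion $F^{*}(M^{*}) \supseteq \mathrm{int}(X^{*})$ by itself says nothing about $F$. You would need to add that $F = F^{*}$ off the collars, that the collar has image in a controlled neighbourhood of $\partial X^{*}$, and that everything $F$ could thereby miss is absorbed into the cusp regions you separately claim are covered by $F(U_i)$, with compatibly chosen constants. Relatedly, ``$F^{*}$ is a homotopy equivalence of pairs'' does not follow formally from the absolute equivalences; what your argument actually needs is that $F^{*}_{*}$ carries $[M^{*},\partial M^{*}]$ to $\pm[X^{*},\partial X^{*}]$, and the honest way to get this is to check that the boundary restriction matches the cusps up bijectively and induces isomorphisms on homology of the cross-sections (closed aspherical flat manifolds, with the restriction inducing the isomorphism of cusp subgroups), and then run the five lemma on the long exact sequences of the pairs.

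Second, the step you yourself flag as the main obstacle, $V_i' \subseteq F(U_i)$, is the crux near the ends and is left unproven; it is essentially a surjectivity theorem in its own right. Concretely, one would radially project $\tilde{F}(\mathrm{lk}(v_i))$ from the parabolic fixed point $p$ onto a horosphere, show the resulting map from the closed flat link manifold to the cusp cross-section is surjective (a degree argument of the same kind as Lemma \ref{HomEqSurj}), and combine this with a uniform bound on the horoball depth of the image of the link and the cone structure of the geodesic semi-ideal simplices to capture a deep cusp neighbourhood. The paper sidesteps all of this geometric control of the ends: it adds the ideal vertices back to form pseudomanifolds $M'$ and $X'$, observes that the straightening homotopy between $F$ and the homeomorphism $F'$ fixes the ideal vertices and hence extends over $M'$, so $F[M']$ is homologous to $F'[M']$ and still generates $H_n(X')$ and $H_n(X', X'-x)$ for manifold points $x$; the local-homology argument of Lemma \ref{HomEqSurj} then applies verbatim, and surjectivity of $F$ on $M$ follows since ideal vertices go to the added points. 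If you keep your framework, expect to reprove a cross-sectional version of Lemma \ref{HomEqSurj} along the way; the compactification argument gets the result with no analysis of the cusp geometry at all.
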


\begin{proof}
In the proof of Lemma \ref{HomotopyEquiv} we showed that the homotopy equivalence $F$ is homotopic to a homeomorphism $F'$, and in fact the homotopy fixes ideal vertices, so the entire homotopy extends to the space we get when we add the ideal vertices back in (replace all ideal simplices with non-ideal ones). Call this pseudomanifold $M'$ and the target $X'$. Then the $n$th homology of $M', X'$ is $\Z$ with fundamental class $[ M' ]$ given by the triangulation formed by replacing the ideal vertices with concrete ones.  Now note that $[M']$ generates both $H_n(M')$ and $H_n(M', M'-x)$ for $x \in M$, and so, as $F[M']$ is homotopic and thus homologous to $F'[M']$, we know the same is true for $F[M']$ in $H_n(X')$ and $H_n(X', X' - x)$  for $x \in X'$. Thus, the same argument from Lemma \ref{HomEqSurj} holds here and $F$ is a surjection on $M'$ and thus also on $M$.
\end{proof}

In order to use Calabi-Weil in the finite volume case we need control over parabolic isometries which can be tricky, especially in higher dimensions where parabolic subgroups are not necessarily translation groups. We solve this problem in higher dimensions by using a stronger statement of rigidity, due to Garland and Raghunathan instead.

\begin{theorem}[Theorem 7.2 in \cite{GarlandRaghunathan70}]\label{GarlandRaghunathan}
Let $G$ be a  lattice in $Isom(\h^n)$ for $n \geq 4$. The identity representation $\rho_0: G \rightarrow Isom(\h^n):=\Lambda$ lies in a connected component of $ Hom(G; \Lambda)$ made up entirely of conjugates of $\rho_0$.
\end{theorem}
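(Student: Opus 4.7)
The plan is to follow Weil's approach to local rigidity, which reduces the statement to a vanishing theorem in group cohomology. First I would show that if $H^1(G,\mathrm{Ad}\,\rho_0) = 0$, where $\mathrm{Ad}\,\rho_0$ denotes the adjoint representation of $\rho_0$ acting on the Lie algebra $\mathfrak{g} = \mathfrak{so}(n,1)$, then $\rho_0$ is locally rigid in the desired sense. The reason is standard deformation theory: $Z^1(G,\mathrm{Ad}\,\rho_0)$ is the Zariski tangent space to $\mathrm{Hom}(G,\Lambda)$ at $\rho_0$, while the coboundaries $B^1(G,\mathrm{Ad}\,\rho_0)$ are exactly the tangent vectors coming from conjugation by elements of $\Lambda$, so that vanishing of $H^1$ means every infinitesimal deformation is tangent to the conjugation orbit. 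An implicit-function-theorem / slice argument then upgrades this infinitesimal statement to the claim that a whole connected component of $\mathrm{Hom}(G,\Lambda)$ containing $\rho_0$ consists of conjugates of $\rho_0$.

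For a uniform lattice the vanishing of $H^1(G,\mathrm{Ad}\,\rho_0)$ follows from de Rham/Hodge theory: it is isomorphic to the first cohomology of the closed hyperbolic manifold $X = G\backslash\h^n$ with values in the flat bundle associated to $\mathrm{Ad}\,\rho_0$, which is computed by harmonic $\mathrm{Ad}\,\rho_0$-valued $1$-forms. A Bochner--Weitzenb\"ock argument using the strict negativity of the curvature of $\h^n$, together with the Zariski density of $G$ in $\Lambda$ (so that the adjoint representation has no invariant vectors), forces such harmonic forms to vanish.

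The genuinely harder case, and the one the paper needs, is when $G$ is non-uniform, so that $X$ is a finite-volume manifold with cusp ends. Here my plan would be to work with $L^2$ cohomology: the central task is to show that a harmonic $1$-form with values in $\mathrm{Ad}\,\rho_0$ representing an infinitesimal deformation must decay sufficiently fast in the cusp neighbourhoods to be square-integrable. One analyses the form in a horoball quotient by the parabolic stabilizer of the cusp, using the fact that horospheres contract exponentially as one moves deeper into the cusp, together with the structure of parabolic subgroups acting on the flat link of the cusp; once $L^2$-ness is established, Garland's extension of the Matsushima--Murakami Bochner argument to the $L^2$ setting yields $H^1_{(2)}(G,\mathrm{Ad}\,\rho_0) = 0$, which can then be identified with the ordinary $H^1$ relevant to deformations.

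The main obstacle is precisely this cusp analysis, and it is also where the hypothesis $n \geq 4$ enters in an essential way. In dimension $3$ the link of a cusp is a $2$-torus whose flat cohomology contributes genuine non-trivial deformations (the deformations realised by hyperbolic Dehn surgery), so no such $L^2$ control is possible; for $n \geq 4$ the flat cohomology of the cusp link, combined with the curvature-driven Bochner estimate, is just strong enough to kill the cuspidal contribution and close the argument.
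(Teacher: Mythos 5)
The paper does not prove this statement at all: it is imported verbatim, with attribution, as Theorem 7.2 of Garland and Raghunathan \cite{GarlandRaghunathan70}, and is used as a black box in Section \ref{SecCusp} precisely so that the parabolic bookkeeping needed for Calabi--Weil can be avoided when $n \geq 4$. So the comparison is with the published proof rather than with anything internal to the paper, and your outline is a fair summary of that circle of ideas: Weil's deformation theory reduces rigidity to vanishing of $H^1(G,\mathrm{Ad}\,\rho_0)$, the uniform case follows from Matsushima--Murakami/Bochner, and the non-uniform case hinges on controlling the cusp contribution, which is the content of Garland's and Garland--Raghunathan's work.

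As a proof, though, the proposal has two genuine gaps. First, vanishing of $H^1$ (Weil local rigidity) only makes the conjugation orbit of $\rho_0$ \emph{open} in $\mathrm{Hom}(G;\Lambda)$; the theorem claims an entire connected component consists of conjugates, which also requires the set of conjugates to be \emph{closed} in that component. An implicit-function-theorem or slice argument does not give this: one needs that a convergent sequence of conjugates limits to a conjugate, which is usually extracted from the fact that limits of discrete faithful lattice representations are discrete and faithful (Chuckrow--J{\o}rgensen--Wielenberg, or a Margulis-lemma argument) combined with Mostow--Prasad rigidity to upgrade the resulting abstract isomorphism to a conjugation. Second, the analytic heart of the matter is asserted rather than proved: it is not automatic that a harmonic $\mathrm{Ad}\,\rho_0$-valued $1$-form representing an infinitesimal deformation is square-integrable on the cusped manifold, and the step ``once $L^2$-ness is established\dots'' is exactly where the real work sits. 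One must correct representatives by coboundaries and analyse the cohomology of the cusp subgroups with coefficients in the restricted adjoint representation; it is in that computation, not in a curvature estimate on the link per se, that the hypothesis $n \geq 4$ acts (and fails for $n=3$, where the cusp tori contribute the Dehn-surgery deformations you mention). So your plan points in the right direction but does not constitute a proof; for the purposes of this paper the correct move is the one the paper makes, namely to cite \cite{GarlandRaghunathan70}.
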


\begin{lemma} \label{CuspedPolynomials4}
Let $M$ be a triangulated finite volume hyperbolic $n$-manifold, $n \geq 4$. There is a system of polynomial inequalities as in Theorem \ref{Grigoriev} such that the following all hold:

\begin{itemize}
\item The solution set consists of cocycles in $C^1(M, SO(n,1))$

\item The solution set has a component consisting entirely of cocycles which induce faithful lattice representations.

\item The system consists of $<(n+1)^5t$ polynomials in $< (n+1)^4t$ variables, with degree bounded by $(n+1)^2t$ and all coefficients $\pm 1$.

\end{itemize}
Furthermore, this systems contains variables for the lengths of all non-ideal edges. 
\end{lemma}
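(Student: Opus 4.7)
The plan is to run the same construction as in Lemmas \ref{cocyclepolys1} and \ref{cocyclepolys2}, adapted to the semi-ideal setting. Two changes are needed. First, the cocycle convention for semi-ideal triangulations restricts inputs to non-ideal edges, so all variables and polynomial relations will be set up using only non-ideal edges and the non-ideal $2$-simplices they bound; this is legitimate because the non-ideal subcomplex is a deformation retract of the semi-ideal complex and therefore carries the full $\pi_1$ information. Second, Calabi--Weil rigidity is to be replaced by Theorem \ref{GarlandRaghunathan}, which is available since $n \geq 4$ and which removes the need to impose polynomial constraints asserting that certain isometries are parabolic (such constraints are awkward in higher dimensions, where parabolic subgroups need not be translation groups).

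The first step is to introduce, for each non-ideal edge $e$ and each of its two orientations, $(n+1)^2$ variables corresponding to the entries of a matrix in $SO(n,1)$, together with quadratic polynomials with $\pm 1$ coefficients enforcing membership in $SO(n,1)$ (via $A^T J A = J$), the inverse relation for opposite orientations, and the face relations coming from each non-ideal $2$-simplex. A semi-ideal $2$-simplex contains only one non-ideal edge (the face opposite the ideal vertex) and so contributes no face relation; this is where the deformation-retract argument is invoked to certify that these are the only relations needed. Counting at most ${n+1 \choose 2} t$ non-ideal edges and ${n+1 \choose 3} t$ non-ideal $2$-simplices keeps the totals below $(n+1)^4 t$ variables and $(n+1)^5 t$ polynomials.

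Next I would add length variables following Lemma \ref{cocyclepolys2}. Working in the hyperboloid model with basepoint $(0,\ldots,0,1)^T$, pick a lift $\tilde{e}$ of each non-ideal edge whose vertices are reachable from the basepoint by a simplicial path through $\leq {n+1 \choose 2} t$ non-ideal edges; define $n+1$ coordinate variables per such vertex as the image of the basepoint under the associated product of cocycle matrices; and introduce a variable $C_e = \cosh(l(e)) - 1$, expressed via the Lorentzian inner product as a quadratic polynomial in the vertex coordinates. Requiring $C_e > 0$ forces $l(e) > 0$. These additions preserve the stated variable and polynomial counts while raising the degree to at most ${n+1 \choose 2} t \leq (n+1)^2 t$, matching the bound.

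The final step is the rigidity argument. The given hyperbolic structure on $M$ provides a faithful lattice representation $\rho_0 : \pi_1(M) \to Isom(\h^n)$, which via Lemma \ref{HomotopyEquiv} yields a cocycle lying in our solution set. The map from cocycles to induced representations $\alpha \mapsto \alpha_*$ is continuous, so a connected component of cocycles maps into a connected component of $\mathrm{Hom}(\pi_1(M), Isom(\h^n))$; by Theorem \ref{GarlandRaghunathan} the component of $\mathrm{Hom}$ containing $\rho_0$ consists of conjugates of $\rho_0$, hence of faithful lattice representations. The main obstacle is the bookkeeping around semi-ideal $2$-simplices: one must be sure that dropping their (non-existent) face relations does not admit spurious cocycles that fail to correspond to representations of $\pi_1(M)$. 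This is precisely the content of the deformation-retract justification for the cocycle convention, which guarantees that the polynomial system we have constructed really does parametrise $C^1(M, SO(n,1))$.
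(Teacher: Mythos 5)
Your proposal is correct and follows essentially the same route as the paper, whose proof simply observes that Lemmas \ref{cocyclepolys1} and \ref{cocyclepolys2} go through verbatim once ``edges and simplices'' are replaced by ``non-ideal edges and non-ideal simplices'' and Calabi--Weil rigidity is replaced by Theorem \ref{GarlandRaghunathan}; your writeup merely makes these substitutions explicit. One cosmetic remark: the cocycle realising the holonomy representation $\rho_0$ is obtained directly as in Lemma \ref{cocyclepolys1}, not via Lemma \ref{HomotopyEquiv}, which goes in the opposite direction (from a cocycle to a map).
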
 

\begin{proof}

The statements and proofs of Lemmas \ref{cocyclepolys1} and \ref{cocyclepolys2} hold as is in the cusped case (for $n \geq 4$) if we replace mention of edges and simplices by non-ideal edges and non-ideal simplices.
The only other difference being that instead of applying Calabi-Weil Rigidity, we apply Theorem \ref{GarlandRaghunathan}.
\end{proof}

In the 3-dimensional case, Garland and Raghunathan's result no longer holds, and Calabi-Weil rigidity require that we have control over where parabolics are sent.  For this we consider $Isom^+(\h^3) \cong PSL(2,\C)$ instead, while sticking with the hyperboloid model, using an action described in the proof. The reason behind this is that $PSL(2,\C)$ offers a remarkably easy method of verifying parabolicity, namely, a matrix in $PSL(2,\C)$ is parabolic iff its trace is $\pm 2$, or equivalently, its trace squares to $4$.

There is one small hitch with looking at representations into $PSL(2,\C)$. To apply the results of section \ref{SecPolynomial} we need to be looking at elements of a variety. Fortunately, the following tells us we can equivalently look at representations into $SL(2,\C)$.

\begin{theorem}[Corollary 2.3 \cite{Culler86} ]
 If a discrete subgroup $G$ of $PSL(2,\C)$ has no 2-torsion then it lifts to $SL(2,\C)$.
 That is there is a homomorphism \[G \rightarrow  SL(2,\C)\] such that the composition with the natural
projection \[SL(2,\C)\rightarrow PSL(2,C)\] is the identity on $G$.
\end{theorem}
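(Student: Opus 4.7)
My plan is to reformulate the statement as a splitting problem for a central extension and then to kill the resulting obstruction using the no-$2$-torsion hypothesis. Pulling back the central extension
\[ 1 \to \{\pm I\} \to SL(2,\C) \to PSL(2,\C) \to 1 \]
along the inclusion $G \hookrightarrow PSL(2,\C)$ gives a central extension $1 \to \Z/2 \to \tilde G \to G \to 1$, where $\tilde G$ is the preimage of $G$ in $SL(2,\C)$. Producing the lift requested by the theorem is equivalent to producing a splitting of this extension, which in turn is equivalent to showing that the classifying class $[\omega] \in H^2(G;\Z/2)$ is trivial. Concretely, a set-theoretic section $s\colon G \to \tilde G$ gives a 2-cocycle $\omega(g,h) = s(g)s(h)s(gh)^{-1} \in \{\pm I\}$, and the lift exists precisely when we can multiply $s$ by a 1-cochain $G \to \{\pm I\}$ so as to turn $\omega$ into the identity cocycle.

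The plan for showing $[\omega] = 0$ is to interpret it geometrically. Identifying $PSL(2,\C) \cong \mathrm{Isom}^+(\h^3)$ and using that $G$ is discrete, the quotient $Q = \h^3/G$ is a good orientable $3$-orbifold and $\tilde G$ classifies precisely the obstruction to lifting its oriented orthonormal frame bundle along the spin double cover $\mathrm{Spin}(3) \to SO(3)$ on isotropy (equivalently, along $SL(2,\C) \to PSL(2,\C)$ globally). In other words, $[\omega]$ is the $w_2$-type spin obstruction of $Q$. On the smooth part of $Q$ this vanishes because every orientable $3$-manifold is parallelizable. At a singular point, the local isotropy is a finite subgroup of $SO(3)$; by the no-$2$-torsion hypothesis this subgroup has odd order, so by Schur--Zassenhaus (applied to the central $\Z/2$ in its preimage in $\mathrm{Spin}(3)$) it lifts uniquely and canonically to $\mathrm{Spin}(3)$. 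Patching this canonical local lift with the global lift on the regular part yields a spin structure on $Q$, which unwinds to a splitting of the central extension for $G$.

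I expect the main obstacle to be executing the patching rigorously in the orbifold setting: making sure the locally canonical lifts around different singular loci agree on overlaps with one another and with the spin structure on the regular part. A cleaner but more pedestrian alternative would be to bypass orbifold $w_2$ entirely and work directly at the cocycle level: classify the non-identity elements of $G$ as loxodromic, parabolic, or elliptic of odd order, pin down a canonical lift of each (loxodromic and parabolic via the $SL$-representative whose trace has positive real part, and elliptic of odd order $n$ via the unique $n$-th root of the identity in $SL(2,\C)$ among its two preimages), and verify relation-by-relation that these canonical lifts assemble into a homomorphism; the odd-order step is exactly where the no-$2$-torsion hypothesis gets used, and the resulting assignment gives the required lift.
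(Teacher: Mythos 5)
The paper does not prove this statement at all --- it is imported verbatim as Corollary 2.3 of Culler's paper --- so the only question is whether your sketch would stand on its own, and it does not yet. Your main route (identify the lifting obstruction for the pulled-back central extension with a spin/$w_2$-type obstruction for the quotient orbifold $Q=\h^3/G$, kill it on the regular part by parallelizability of orientable $3$-manifolds, and kill it on the odd-order cyclic isotropy by Schur--Zassenhaus) is the right kind of argument, but the step you defer as ``the main obstacle'' is in fact the entire content of the theorem. Local vanishing does not imply global vanishing of a degree-two obstruction: concretely, writing $\Sigma$ for the singular locus, $Q_0=Q\setminus\Sigma$, and $G\cong\pi_1^{orb}(Q)=\pi_1(Q_0)/\langle\langle \mu_i^{n_i}\rangle\rangle$ with $\mu_i$ the meridians and $n_i$ the (odd) local orders, your argument produces a lift $\tilde\rho$ of the restriction of the inclusion to $\pi_1(Q_0)$, and this descends to $G$ only if $\tilde\rho(\mu_i)^{n_i}=I$ rather than $-I$ for \emph{every} $i$. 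Correcting the bad meridians means finding a character $\pi_1(Q_0)\to\{\pm1\}$ with prescribed values on all the $\mu_i$ simultaneously, i.e.\ exactly the global compatibility (vanishing of the orbifold $w_2$, not merely of $w_2$ of the regular part) that you acknowledge but never establish. Until that is done, the theorem is not proved.

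The ``more pedestrian alternative'' does not repair this, because the pointwise canonical lift you propose is not a homomorphism, so ``verify relation-by-relation'' cannot succeed. First, the rule is not even well defined: a loxodromic with rotation angle $\pi$ has purely imaginary trace, so neither preimage has trace of positive real part. Second, and more seriously, the rule is not multiplicative: take $A=\mathrm{diag}(\lambda,\lambda^{-1})$ with $\lambda\gg1$ and $B=\begin{pmatrix}-1&5\\-1&4\end{pmatrix}$, so $\mathrm{tr}(A)>2$, $\mathrm{tr}(B)=3>2$, but $\mathrm{tr}(AB)=-\lambda+4\lambda^{-1}<-2$; for $\lambda$ large $\langle A,B\rangle$ is a discrete free (Schottky) group, hence within the scope of the theorem, and the positive-trace lifts of $a$, $b$, $ab$ violate $\widetilde{ab}=\tilde a\tilde b$. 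A lift does exist, but it differs from any such trace-normalized assignment by a sign function $G\to\{\pm1\}$ that is in general not identically $1$ and not a character you have exhibited; producing it is again the same global problem as above. Culler's actual proof does genuine topological work to handle precisely this sign-coherence issue, so neither of your two routes, as written, substitutes for it.
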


Combining this with the fact that connected components of $Hom (G, SL(2,\C))$ map down to connected components of $Hom(G, PSL(2,\C))$ tells us that if we apply the results of Section \ref{SecPolynomial} to a system such as those created so far but with images in $SL(2,\C)$, then we can find a connected component of the solution set which contains a solution inducing a faithful lattice representation, and thus by  Theorem \ref{GarlandRaghunathan} the whole component consists of such representations.

Because of the focus on parabolic elements, which correspond to loops which are freely homotopic to ideal vertices, it will be useful to be able to talk about the parabolics corresponding to each individual ideal vertex with the following definition.

\begin{defn}[Cusp Fundamental Group]
We define the fundamental group of a given cusp in $M$ to be fundamental group of the star of a chosen ideal vertex. Note that choosing a path from our chosen basepoint for $M$ to our chosen basepoint for the star of an ideal vertex defines an embedding of the cusp fundamental group into the fundamental group of $M$. All such choices define conjugate embeddings.
\end{defn}  

We can now develop a system of polynomials which suits the final case of cusped hyperbolic 3-manifolds.

\begin{lemma} \label{CuspedPolynomials3}
Let $M$ be a finite volume hyperbolic $3$-manifold triangulated by $t$ tetrahedra with fundamental group $G$. There is a system of polynomial inequalities as in Theorem \ref{Grigoriev} such that the following all hold:

\begin{itemize}
\item The solution set consists of cocycles in $C^1(M, SL(2,\C))$ which induce representations in $Hom_{par}(G, SL(2,\C))$.

\item The solution set has a component consisting entirely of cocycles each of which induces a faithful lattice representation.

\item The system has each of its measures of complexity $N, \kappa, d, M$ (as in Section  \ref{SecPolynomial}) bounded above by a constant multiple of $t$.

\end{itemize}
Furthermore, this systems contains variables for the lengths of all non-ideal edges in the image of the homotopy equivalence defined in Lemma \ref{HomotopyEquiv}. 
\end{lemma}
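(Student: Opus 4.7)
The approach is to repeat the constructions of Lemmas \ref{cocyclepolys1}, \ref{cocyclepolys2}, and \ref{CuspedPolynomials4} with two changes: swap the target group to $SL(2,\C)$, and impose parabolicity at each cusp as additional polynomial equations. To retain the hyperboloid model, I would identify $\h^3$ with the positive-definite sheet of determinant-$1$ Hermitian $2\times 2$ matrices, on which $SL(2,\C)$ acts by $A\cdot H = AHA^{*}$; the action, the Lorentzian form coming from the determinant, and the quantity $\cosh(l(e)) - 1$ are then polynomials of bounded degree in the real and imaginary parts of the matrix entries.

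For each of the $O(t)$ oriented edges of the (semi-ideal) triangulation, introduce $8$ real variables encoding the entries of an $SL(2,\C)$ matrix, subject to two real quadratic equations asserting $\det = 1$. Face and orientation-reversing relations become systems of quadratic polynomials with $\pm 1$ coefficients, just as in Lemma \ref{cocyclepolys1}. The path-tracking construction of Lemma \ref{cocyclepolys2} carries over verbatim to introduce vertex-image variables and variables $C_e = \cosh(l(e)) - 1 > 0$ for each non-ideal edge, each satisfying polynomial relations of degree $O(t)$. The new ingredient is the parabolicity constraint: for each ideal vertex $v_i$ I would pick two loops $m_i, \ell_i$ generating the corresponding cusp fundamental group by taking a spanning tree of $\mathrm{st}(v_i)$ together with a spanning path to the global basepoint. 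Each such loop has simplicial length $O(t)$, so $\alpha(m_i)$ and $\alpha(\ell_i)$ become matrices whose entries are polynomials of degree $O(t)$ in the cocycle variables, and parabolicity is enforced by the single equation $(\mathrm{tr}\,\alpha(\gamma))^2 = 4$ per generator, yielding $O(t)$ further equations in total.

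For the faithful component, I would first apply Culler's theorem to lift the discrete faithful inclusion $G \hookrightarrow PSL(2,\C)$ to $SL(2,\C)$ (valid since a torsion-free lattice has no $2$-torsion) and then extend it to a cocycle via Lemma \ref{HomotopyEquiv}; this cocycle satisfies every equation in the system, including all of the parabolicity ones. Calabi--Weil rigidity applied to $Hom_{par}(G, Isom(\h^3))$, combined with the fact that connected components of $Hom(G, SL(2,\C))$ cover those of $Hom(G, PSL(2,\C))$, then ensures that the entire connected component of this cocycle consists of conjugates of the identity representation and hence of faithful lattice representations. Tallying the construction gives $N, \kappa, d = O(t)$ and $M = O(1)$, as claimed.

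The principal obstacle is keeping the degree of the parabolicity polynomials at $O(t)$: this requires realising the chosen cusp generators and their connecting paths back to the global basepoint as simplicial loops of length $O(t)$, uniformly across all cusps. The needed bounds are in place because the star of each ideal vertex has $O(t)$ simplices and the $1$-skeleton admits a spanning tree with $O(t)$ edges, but a coherent choice has to be made so as not to let the degree blow up with the number of cusps.
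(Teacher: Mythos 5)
Your construction matches the paper's almost everywhere: the same $SL(2,\C)$ cocycle equations with quadratic $\pm1$ polynomials, the same Hermitian-matrix realisation of the hyperboloid model with the action $X \mapsto AXA^{*}$ to encode edge lengths, and the same endgame of lifting the discrete faithful representation via Culler's theorem and invoking Calabi--Weil through the covering of components of $Hom(G,PSL(2,\C))$ by components of $Hom(G,SL(2,\C))$. The gap is in how parabolicity is imposed. You enforce only $(\mathrm{tr}\,\alpha(\gamma))^2=4$ on a generating set of each cusp group, but that condition on generators does not by itself place the induced representation in $Hom_{par}$: two parabolics with distinct fixed points can multiply to a loxodromic or elliptic, so other elements of the cusp subgroup need not be parabolic, and then the connected component of your solution set is not known to stay inside $Hom_{par}(G,PSL(2,\C))$, which is exactly what the Calabi--Weil step requires. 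The paper addresses this head-on: it adds, for each cusp, new variables for a common fixed point at infinity together with polynomial equations forcing every cusp generator to fix it, and then uses that parabolics of $Isom^+(\h^3)$ fixing the same point restrict to horospherical translations, so all products are again parabolic. Your proposal contains neither these equations nor a substitute argument (one could try to argue that the cusp group is $\Z^2$, so images of its generators commute and nontrivial commuting parabolics share a fixed point, but you do not say this, and you would still have to confront the degenerate $\pm I$ solutions that the trace condition allows); as written, the first bullet of the lemma is not established for the whole solution set.

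A second, smaller discrepancy: you propose to use exactly two loops $m_i,\ell_i$ per cusp, realised as simplicial loops of length $O(t)$ from a spanning tree of the star, and you yourself flag that making this choice coherently is the ``principal obstacle'' without resolving it. Indeed, an arbitrary pair of edge-loops from the spanning-tree construction need not generate the $\Z^2$ cusp group, and realising a chosen generating pair by short simplicial loops is not immediate. The paper sidesteps this entirely: it takes a spanning tree of the \emph{link} of each ideal vertex and uses \emph{all} of the at most $6t$ edge-loops it produces as (a possibly redundant) generating set; each has simplicial length at most $6t$, so all degrees and counts remain $O(t)$ and no careful selection is needed. Adopting that device, together with the common-fixed-point equations, closes both gaps and brings your argument in line with the paper's proof.
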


\begin{proof}
Outside of the handling of parabolics, the proofs here are very similar to the $SO(n,1)$ case, so we will only outline where the proofs differ from those given for Lemmas \ref{cocyclepolys1} and \ref{cocyclepolys2}.

To get a system of polynomials which has cocycles in $C^1(M, SL(2,\C))$ as its solutions (not yet including variables for edge lengths and not necessarily inducing representations in $Hom_{par}(G, SL(2,\C)$), the same proof as for $SO(n,1)$  holds. We get some constant multiple of $t$ as a bound for the number of polynomials and variables, and the polynomials have degree at most quadratic with coefficients $\pm 1$.

For edge lengths, we can again define variables for the end points of lifts of edges as well as their lengths in the hyperboloid model. The numbers of polynomials and variables added is bounded by a multiple of $t$ and the degree and coefficient complexity of the polynomials are both bounded by a constant. This uses the following action of $SL(2,\C)$ on the hyperboloid model. 

One can represent the point $(x,y,z,t)$ in the hyperboloid model by the matrix 
\[X  =  \begin{pmatrix}
t+z & x - iy \\
x + iy & t-z
\end{pmatrix}\] 
and then $A \in SL(2,\C)$ acts by

\[ X \mapsto AX A^*.\]
Note that taking determinant gives the standard quadratic form used to define the hyperboloid and the action preserves determinant.

Finally the big difference in the cusped case is that we need to check that parabolic subgroups of $\pi_1(M)$ are indeed sent to parabolics in $PSL(2,\C)$ for each representation induced by a cocycle solution to our system. In three dimensions it is enough to check that generators of the cusp fundamental group are sent to parabolics which fix the same point at infinity, as parabolics in $Isom^+(\h^3)$ restrict to translations on horospheres about their fixed point, and so products of such parabolics are also parabolics, as non-trivial products of translations are non-trivial translations. Note this is not true in higher dimensions where parabolics don't have to restrict to translations and so two parabolics with the same fixed point at infinity can have a non-trivial elliptic as their product.

To do this we take generating sets of each cusp fundamental group as described in the proof of Lemma \ref{HomotopyEquiv}. That is we pick a spanning tree of the link of each ideal vertex, then the remaining edges define loops in the link, each of which can be homotoped arbitrarily close to the ideal vertex and thus maps to a parabolic, and which together generate the fundamental group of the cusp. As there are less than $6t$ total edges, we see that there are less than $6t$ such generating curves, each of which is a product of less than $6t$ edges. 
So  for each curve we define polynomials which require that the square of the trace of the matrix associated to each such curve be $4$. As this matrix is a product, the degree of these polynomials could be as much as $6t$. We also introduce new variables which correspond to the shared fixed points of the generating set of a cusp, and polynomials requiring that they be fixed. 

At all times however, all measures of complexity of this system are bounded by a multiple of $t$.

Thus we have a system of polynomials whose solutions correspond to cocycles in $C^1(M, SL(2,\C))$ and also to the lengths of edges in the image of the homotopy equivalence. We also know that parabolic generators (and hence the entirety of each parabolic subgroup) are mapped to parabolic elements in the induced representation into $PSL(2,\C)$. We know that there is a solution to this system inducing  a  faithful lattice representation and the entire connected component of the solution set containing this solution induces representations into $PSL(2,\C)$ which all lie in the same component of $Hom_{par}(\pi_1(M), PSL(2,\C))$ and hence, by Calabi-Weil Rigidity these representations are all themselves faithful lattices
\end{proof}

As in the closed case, the following lemma follows by applying Theorem \ref{Grigoriev} to the systems of polynomials defined in Lemmas \ref{CuspedPolynomials4} and \ref{CuspedPolynomials3} and simplifying big O notation.
\begin{lemma} \label{CuspedEdgeBound}
Let $M$ be a finite volume hyperbolic manifold equipped with a semi-ideal triangulation by $t$ semi-ideal $n$-simplices. Then there exists a homotopy equivalence $F$ as described in Lemma \ref{HomotopyEquiv} such that the image of each non-ideal edge has length $l(e)$  bounded in the following way:

\[ l(e) \leq (nt)^{O(n^4t)} . \]
\end{lemma}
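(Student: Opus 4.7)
The plan is to mirror the closed-case argument (Theorem \ref{EdgeBound}) while handling the two regimes $n\geq 4$ and $n=3$ separately, since they call on Lemmas \ref{CuspedPolynomials4} and \ref{CuspedPolynomials3} respectively. In both cases I would begin by invoking the appropriate lemma to obtain a system of polynomial inequalities whose solution set has a distinguished connected component, every cocycle in which induces a faithful lattice representation. By Mostow–Prasad rigidity all such representations yield isometric hyperbolic structures, so by Lemma \ref{HomotopyEquiv} (together with Lemma \ref{CuspedSurjection} to know the resulting map is genuinely a surjective homotopy equivalence) we obtain for each such cocycle a homotopy equivalence $F\colon M \to X$ whose image consists of geodesic $n$-simplices. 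Crucially, both Lemmas \ref{CuspedPolynomials4} and \ref{CuspedPolynomials3} include, among the variables of the system, quantities $C_e = \cosh(l(e))-1$ for every non-ideal edge $e$, constrained by $C_e > 0$.

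Next I would read off the complexity parameters $N,\kappa,d,M$ of each system. For $n\geq 4$, Lemma \ref{CuspedPolynomials4} gives $N \leq (n+1)^4t$, $\kappa \leq (n+1)^5 t$, $d \leq (n+1)^2 t$, and $M \leq 2$, exactly as in the closed case. For $n=3$, Lemma \ref{CuspedPolynomials3} states that $N,\kappa,d,M$ are all bounded by a constant multiple of $t$, which is even better (and absorbed by the $n=3$ constants in the big-$O$). I would then apply Corollary \ref{variablebound} to conclude that every non-zero variable of a bounded solution — in particular each $C_e$ — satisfies
\[ C_e \leq 2^{M(\kappa d)^{O(N)}} \leq 2^{(nt)^{O(n^4 t)}}. \]

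From this I would extract the bound on $l(e)$ itself. Since $\cosh(l(e)) = C_e + 1 \leq 2^{(nt)^{O(n^4 t)}}$ and $\cosh x \geq \tfrac{1}{2}e^{x}$ for $x \geq 0$, we get
\[ l(e) \leq \log\bigl(2\cosh(l(e))\bigr) \leq (nt)^{O(n^4 t)} + O(1), \]
which simplifies under big-$O$ to $l(e) \leq (nt)^{O(n^4 t)}$. The positivity clause $C_e > 0$ in the polynomial systems ensures $l(e) > 0$, so every edge in the image of $F$ is genuinely a non-degenerate geodesic segment of bounded length.

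The only real subtlety — and the one that I expect to be the main conceptual obstacle, though it is already absorbed into the cited lemmas — is the $n=3$ parabolic bookkeeping: Lemma \ref{CuspedPolynomials3} has to force generators of each cusp subgroup to map to parabolics with a common fixed point at infinity so that Calabi–Weil rigidity can be applied to the relevant component of $\mathrm{Hom}_{\mathrm{par}}$. Once that lemma is in hand, however, the derivation of the edge-length bound is a direct, almost mechanical application of Theorem \ref{Grigoriev} and Corollary \ref{variablebound}, and the final simplification to $(nt)^{O(n^4 t)}$ is identical in form to the closed case of Theorem \ref{EdgeBound}.
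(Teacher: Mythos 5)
Your proposal is correct and follows essentially the same route as the paper, which proves Lemma \ref{CuspedEdgeBound} by applying Theorem \ref{Grigoriev} (via Corollary \ref{variablebound}) to the polynomial systems of Lemmas \ref{CuspedPolynomials4} and \ref{CuspedPolynomials3}, exactly as in the closed case of Theorem \ref{EdgeBound}, and then simplifying the big-$O$ notation. Your extra care in splitting the cases $n\geq 4$ and $n=3$ and converting the $\cosh$ bound to a bound on $l(e)$ simply makes explicit what the paper leaves implicit.
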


\begin{theorem}
Given a finite volume hyperbolic $n$-manifold $(n \geq 3)$ triangulated by $t$ semi-ideal $n$-simplices, let $R$ be the length of a systole in $M$, then
\[ R \geq \frac{1}{2^{(nt)^{O(n^4t)} }}. \]
\end{theorem}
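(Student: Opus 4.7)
The plan is to mirror the proof of Theorem \ref{MainThm2} in the cusped setting, substituting Lemmas \ref{CuspedSurjection} and \ref{CuspedEdgeBound} for their closed-case analogues. The essential new obstacle is that a finite volume hyperbolic manifold has infinite diameter, so one cannot bound the radius of a Margulis tube directly by $\mathrm{diam}(M)$; we must exhibit a compact subregion of bounded diameter which contains every Margulis tube.

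First I would apply Lemma \ref{CuspedEdgeBound} to obtain a homotopy equivalence $F : M \to X$ from the semi-ideal triangulation as in Lemma \ref{HomotopyEquiv}, whose non-ideal edges in the image have length at most $B(t) \leq (nt)^{O(n^4t)}$; by Lemma \ref{CuspedSurjection} this $F$ is surjective. Next, pick $\epsilon > 1/(6\pi)^n$ so that Theorem \ref{Margulis} produces a thick-thin decomposition of $M_{(0,\epsilon)}$, and let $M' \subseteq M$ denote the complement of all cusp neighbourhoods of this thin part. If there are no tube components in the thin part, then $R \geq 2\epsilon_n \geq 2/(6\pi)^n$ and the conclusion holds trivially; otherwise the systole $\gamma$ lies in a Margulis tube, which Theorem \ref{Margulis} guarantees is a positive distance from every cusp neighbourhood and thus contained in $M'$. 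Theorem \ref{InjRadDiameter} then yields
\[ \mathrm{diam}(M') \;\geq\; \tfrac{1}{n}\log(1/R) + \log(\epsilon_n) - \log(4). \]

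The crux of the argument is to produce an upper bound $\mathrm{diam}(M') \leq (nt)^{O(n^4t)}$. For each semi-ideal simplex $\Delta$, its intersection $\Delta \cap M'$ is compact, with diameter controlled by two quantities: the mutual hyperbolic distances among the $n$ non-ideal vertices of $\Delta$ (each at most $B(t)$), and the depth to which $\Delta$ extends into its associated cusp before being truncated by the horospherical boundary at level $\epsilon_n$. In the upper half space model with the ideal vertex of $\Delta$ at $\infty$, the bounded non-ideal edge lengths pin the heights of the non-ideal vertices of $\Delta$ to lie in a bounded ratio, and the truncation height can be controlled: the maximal horoball projecting injectively to $M$ has boundary horosphere on which the injectivity radius is exactly $\epsilon_n/2$, and a standard horospherical estimate together with the bounded vertex-height ratio gives a depth bound of $O(B(t) + \log(1/\epsilon_n))$. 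Since the non-ideal $1$-skeleton is connected (any two semi-ideal simplices sharing a face share at least $n-1$ non-ideal vertices) and contains at most $t\binom{n}{2}$ edges, summing over the $t$ simplices and absorbing $\log(1/\epsilon_n) = O(n)$ via Theorem \ref{Kellerhalls} produces the desired bound on $\mathrm{diam}(M')$.

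Combining the two bounds on $\mathrm{diam}(M')$ gives $\log(1/R) \leq (nt)^{O(n^4t)}$, which exponentiates to the claimed $R \geq 1/2^{(nt)^{O(n^4t)}}$. The main technical obstacle is exactly the cusp-depth estimate: establishing that no simplex can be truncated arbitrarily far below its non-ideal vertices requires using the Margulis condition on the boundary horosphere together with the bounded horospherical distances between the non-ideal vertices projected above them; the closed-case diameter argument is unavailable here and this is the only genuinely new input the cusped proof demands.
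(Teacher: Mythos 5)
Your proposal is correct and follows essentially the same route as the paper: the paper likewise reduces the problem to showing that every point of $X$ outside the cusp components of the thin part lies within distance of order $\log(tB(t)/\epsilon_n)$ of the image $\bar{X}$ of the non-ideal simplices (by pushing the links of the ideal vertices down the cusp and using the exponential contraction of horospherical distances), and then combines this with Theorem \ref{InjRadDiameter}, Lemma \ref{CuspedEdgeBound} and Theorem \ref{Kellerhalls}. The differences are only organisational: the paper runs the depth estimate over stars and links of ideal vertices rather than simplex by simplex, and bounds the distance from the systole to its tube boundary via a $\pi_1$ argument (the tube has cyclic fundamental group, so cannot contain $\bar{X}$) instead of bounding the diameter of the complement of the cusp neighbourhoods directly; also, your claim that the maximal embedded horoball has injectivity radius exactly $\epsilon_n/2$ on its boundary is imprecise but inessential, since all that is needed is that a point displaced less than $\epsilon_n$ by a cusp parabolic lies in a cusp component of the thin part.
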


\begin{proof}
Throughout, we let $B(t)$ be a bound on the edge length of the image of edges under the homotopy equivalence $F$.
Let $\bar{X}$ be the image in $X$ under $F$ of the non-ideal simplices in $M$. For each simplex in $M$, the diameter of the image of its non-ideal part is bounded by the maximum length of its edges, hence by $B(t)$, and thus the diameter of $\bar{X}$ is bounded by $tB(t)$. Note also that, as the links of ideal vertices consist entirely of non-ideal simplices, the images of links of ideal vertices lie in $\bar{X}$.

We aim to show that points in $X$ either lie in a bounded neighbourhood of $\bar{X}$, or they lie in the subset of the thin part made up of cusp neighbourhoods.

Note that the complement of $\bar{X}$ is covered by the images of the stars of the ideal vertices of $M$. Consider one such vertex $v_i$, and its star $st(v_i)$, let $S_i$ denote the image of $st(v_i)$ in $X$ and $L_i$ denote the image of $lk(v_i)$. Then $S_i$ is a union of geodesic semi-ideal $n$-simplices. Take a collection of lifts (one per simplex) of these semi-ideal $n$-simplices in $\h^n$ (the upper half space model) such that the collection is connected, and shares the same point on the boundary. We can perform an isometry of $\h^n$ taking this shared point to the point at infinity. We shall  call this collection $T_i$, and note that we still have a canonical map from $T_i$ into $X$. 

Consider the map $\phi_d$ which takes each point $p$ in the upper half space model and translates it a hyperbolic distance $d$ along the unique (vertical) geodesic through $p$ and the point at infinity. Then this map scales distances in $\h^n$ by 
\[ \frac{1}{e^d}. \]

Let $\psi_d$ be the map on $S_i$ which is induced by $\phi_d \mid_{T_i}$, then any edge $L_i$ has its length scaled by the scale factor above. Now as $L_i \subseteq \bar{X}$, its diameter is bounded by $tB(t)$. Thus setting 
\[ d_0 = log \frac{tB(t)}{\epsilon_n}, \]
we know that for all $d \geq d_0$, the image of $L_i$ under $\psi_d$ has diameter bounded above in the following way

\[  Diam (\psi_d(L_i )) < \frac{1}{e^{log\frac{t(B(t))}{\epsilon}}} t(B(t)) = \epsilon_n  \]

Thus as the union of all $\psi_d(L_i)$ covers $S_i$, every point $p$ lies in some $\psi_d(L_i)$. If $d\leq d_0$, then $p$ lies in a $d_0$ neighbourhood of $\bar{X}$. If $d > d_0$, then there is some curve of length $<2\epsilon_n$ based at $p$ which lies entirely in $\psi_d(L_i)$, and hence $p$ , and in fact all of $\psi_d (S_i)$ lies in the thin part of $X$. As $\psi_d(S_i)$ is a neighbourhood of an ideal vertex, it must lie in one of the cusp neighbourhoods of the thin part.

The thin part of a manifold decomposes into cusp neighbourhoods and tubes which are disjoint. Furthermore, there are no geodesics in the cusp neighbourhoods as translation towards the end of the cusp decreases the length of a curve. Thus either all systoles lie in the thick part and have length bounded below by $2 \epsilon_n$ or they lie in the thin part and are disjoint from all the cusp neighbourhoods.

Each systole must then lie in the $d_0$-neighbourhood of $\bar{X}$. This means that the entire $1$-skeleton of $\bar{X}$ lies within distance 
\[ diam(\bar{X}) + d_0 < t B(t) + log \frac{tB(t)}{\epsilon_n} \]
of each systole. By using Theorem \ref{CuspedEdgeBound} as a bound for $B(t)$, using Theorem \ref{Kellerhalls}  as a bound for $\epsilon_n$ and simplifying big O notation, we get that $\bar{X}$ lies within distance 
\[ (nt)^{O(n^4t)}. \] 
of the systole. This is therefore also an upper bound for the distance from each systole to the boundary of the tube containing it. Indeed if $\bar{X}$ lies in this tube, then a generating set for $\pi_1(X)$ lies in this subspace which has fundamental group $\Z$, and so $\pi_1(X)$ is a subgroup of $\Z$, a contradiction as $\pi_1(X)$ is the fundamental group of a finite volume hyperbolic manifold and so cannot be $\Z$.

Thus by Theorem \ref{InjRadDiameter}
\[ \frac{1}{n} log\left(\frac{1}{R}\right) +  log (\epsilon_n) -log( 4)  < (nt)^{O(n^4t)}\]

and hence we can rearrange this (simplifying big O notation) to
\[   log\left(\frac{1}{R}\right) \leq   (nt)^{O(n^4t)} + nlog(n (6\pi)^n) \]

and thus simplifying again
\[R \geq \frac{1}{2^{(nt)^{O(n^4t)}}} \]
\end{proof}

\newpage
\newcommand{\etalchar}[1]{$^{#1}$}

\end{document}